\documentclass[pdflatex,sn-mathphys-num,12pt]{sn-jnl}

\usepackage{graphicx}%
\usepackage{multirow}%
\usepackage{amsmath,amssymb,amsfonts}%
\usepackage{bbm}%
\usepackage{amsthm}%
\usepackage{mathrsfs}%
\usepackage[title]{appendix}%
\usepackage{xcolor}%
\usepackage{textcomp}%
\usepackage{manyfoot}%
\usepackage{booktabs}%
\usepackage{algorithm}%
\usepackage{algpseudocode}%
\usepackage{listings}%
\usepackage{wrapfig}%
\usepackage{setspace}%

\theoremstyle{thmstyleone}%
\newtheorem{theorem}{Theorem}
\theoremstyle{thmstyletwo}%

\theoremstyle{thmstylethree}%
\newtheorem{definition}{Definition}%

\newcommand{\half}{\mbox{\small $\frac{1}{2}$}}
\newcommand{\sect}{\S}

\begin{document}

\title[Rational GP approximation]{Rational approximation and 
intrinsic\\ Gaussian processes}


\author*[1]{\fnm{Christopher} \sur{Beattie}}\email{beattie@vt.edu}

\author[2]{\fnm{David} \sur{Higdon}}\email{dhigdon@vt.edu}

\author[2]{\fnm{Leanna} \sur{House}}\email{lhouse@vt.edu}
\author[2]{\fnm{Colby} \sur{Stakun-Pickering}}\email{colbysp32@vt.edu}
\author[2]{\fnm{Jared} \sur{Clark}}\email{cjared96@vt.edu}

\affil*[1]{\orgdiv{Department of Mathematics}, \orgname{Virginia Tech}, \orgaddress{\street{225 Stanger Street}, \city{Blacksburg}, \postcode{24061}, \state{VA}, \country{USA}}}

\affil[2]{\orgdiv{Department of Statistics}, \orgname{Virginia Tech}, \orgaddress{\street{250 Drillfield Drive}, \city{Blacksburg}, \postcode{24061}, \state{VA}, \country{USA}}}


\abstract{Gaussian processes (GPs) defined through intrinsic random fields provide a flexible framework for modeling spatial phenomena, and have been advocated in a variety of applications over the past several decades.
Nevertheless, their adoption has lagged behind traditional, covariance-based approaches, in part because the intrinsic formulation has lacked an accompanying toolkit of computational methods and dependence specifications that facilitate fitting and prediction. We  develop here a systematic framework for modeling intrinsic GPs and introduce practical algorithms and dependence/variogram models for modeling, inference and computation that parallel those of traditional, stationary GPs.  
We explore a close connection between intrinsic GP models and rational approximation, which clarifies the underlying problem structure 
Numerical examples illustrate how the new tools can be deployed in practice, highlighting the advantages of intrinsic-field modeling in terms of robustness, interpretability, and computational efficiency. }

\keywords{Intrinsic Gaussian processes, Variograms, spatial interpolation, Kriging}


\pacs[MSC Classification]{60G15,62M30,62M20,65C20}

\maketitle
 
\section{Introduction}
\label{sec:sec1}

Gaussian processes (GPs) have a long history in the interpolation and estimation of response surfaces over spatial domains \cite{matheron1973intrinsic,krige1976some,journel1976mining,cressie2015statistics}, and more recently, GPs have made inroads in machine learning, serving as surrogates for demanding computational models \cite{williams2006gaussian,gramacy2020surrogates}.   Surrogate GPs offer a flexible, explainable means to model, approximate, and quantify uncertainty of computer simulations.   However, as with any statistical model, different analytic assumptions of GPs may influence approaches for parameter estimation and, more importantly, final predictions.  For example, analyses often assume stationary GPs for computation and analytic convenience, but have (potentially) un-intentional influence on model predictions in unobserved regions of the response surface.  Joseph \cite{joseph2006limit} cited this influence and remedied the problem by  developing rational krigging.  In this paper,  we do the same.  We re-introduce {\em intrinsic} GPs (IGPs)  \cite{matheron1973intrinsic,besag1995bayesian} for modeling complex surfaces to avoid stationary GP pitfalls.  Additionally, we draw parallels between Joseph's \cite{joseph2006limit} work in rational interpolation and our IGP predictions a posteriori.

The primary difference between stationary GPs and intrinsic GPs (IGPs) is how spatial dependence is defined and estimated.  For example, consider a stationary GP (with constant mean $\mu$) 
$\{Z(\mathbf{s})|\mathbf{s}\in\Omega\}$ defined over the input space $\Omega$ (typically $\Omega = \mathbb{R}^p$).  Given any finite collection of points 
$\mathbf{s} = \{\mathbf{s}_1,\mathbf{s}_2,\ldots,\mathbf{s}_n\}\subset \Omega$, the random $n$-vector $\mathbf{Z}$ follows a multivariate normal distribution 
\[
\mathbf{Z} = (Z(\mathbf{s}_1),Z(\mathbf{s}_2), \ldots,Z(\mathbf{s}_n))^T \sim 
N_n(\mathbf{e} \mu,\boldsymbol{\Sigma})
\]
where $\mathbf{e}$ is the vector of ones and elements of $\boldsymbol{\Sigma}$, $\boldsymbol{\Sigma}_{ij}$,  are determined by a positive definite covariance function 
$\mathbf{C}(||\mathbf{s}_i-\mathbf{s}_i||)$.   This GP,  $\{Z(\mathbf{s})|\mathbf{s}\in\Omega\}$ , is termed stationary because the distribution of $\mathbf{Z}$ is unchanged when the locations $\mathbf{s}$  are displaced by a constant displacement vector $\mathbf{d}$
\[
\mathbf{Z}' = (Z(\mathbf{s}_1+\mathbf{d}),Z(\mathbf{s}_2+\mathbf{d}), \ldots,Z(\mathbf{s}_n+\mathbf{d}))^T \stackrel{d}{=}
\mathbf{Z}.
\]
Two common choices for the covariance function in stationary GPs are the {\em exponential}  
($\mathbf{C}(||\mathbf{s}_i-\mathbf{s}_i||)=
\sigma^2_z \exp\{-\theta||\mathbf{s}_i-\mathbf{s}_i||\}$) and the {\em Gaussian}  
($\mathbf{C}(||\mathbf{s}_i-\mathbf{s}_i||)=
\sigma^2_z \exp\{-\theta ||\mathbf{s}_i-\mathbf{s}_i||^2\}$). In both cases $\sigma_z^2$ controls the marginal variance of the process and $\theta$ controls how the spatial dependence decays as a function of distance.  Fig.~\ref{fig:fig1} shows several 1-d realizations of the exponential-covariance (top row, column 2) GP and Gaussian-covariance GP (bottom row, column 2), with mean $\mu=0$, $\sigma_z=1$, and $\theta=1$.

Unlike stationary GPs, Intrinsic GPs (IGPs) are characterized by variograms, $\gamma()$, not  covariance functions.  
Variograms express spatial dependence by quantifying the variance of process differences  so that $\gamma(\mathbf{s}, \mathbf{s}+\mathbf{d}) = \frac{1}{2} \mbox{Var}[
Z(\mathbf{s})-Z(\mathbf{s}+\mathbf{d})]$. Albeit, covariance functions of stationary GPs can  equivalently be represented with its variogram:
 $\gamma(\mathbf{s}, \mathbf{s}+\mathbf{d})=\gamma(\mathbf{0}, \mathbf{d}) = C(\mathbf{0}) - C(\mathbf{d})$,  but the reverse it not true for IGPs.   
For example, Brownian motion is a zeroth-order IGP with $\gamma(\mathbf{s}, \mathbf{s}+\mathbf{d}) = \sigma_z^2 ||\mathbf{d}||$ where $\sigma_z^2$ scales distances $\mathbf{d}$, and does not have a covariance function without added constraints ( e.g.,  $Z(0)=0$).  Similarly,  an IGP produced by convolving the Brownian motion process with a normal kernel that has standard deviation $r$ will not have a covariance function (without added constraints).  Even though this convolved-Brownian motion process produces smooth realizations, analogous to the realizations from a GP with the Gaussian covariance function, spatial dependence is only defined by its variogram.  The resulting variogram when  smoothing 1-dimensional Brownian motion is given by 
\[
\gamma(\mathbf{s}, \mathbf{s}+\mathbf{d}) = \frac{\sigma_z^2 }{2}
\left\{
  \frac{2r}{\sqrt{\pi}} \left[ \exp\left(
     \frac{-||\mathbf{d}||^2}{4r^2}\right) -1    \right] +
  ||\mathbf{d}|| \left[ \Phi\left(\frac{ ||\mathbf{d} ||}{\sqrt{2} r} \right) -
               \Phi\left(\frac{-||\mathbf{d} ||}{\sqrt{2} r} \right)
        \right]
\right\}
\]
where $\Phi(\cdot)$ is the standard normal cumulative density function (the generalization to $p$-dimensions is given in the appendix).

The variograms of Brownian motion IGP and convolved-Brownian motion IGP are plotted in column 1 of Fig.\ref{fig:fig1}.  In the same plots are the variograms translated from the exponential and Gaussian covariance functions.  Even though the IGP variograms are ``tuned'' to match those of their stationary counterparts at 0 and .05 (so that IGP samples will have similar small scale behavior as stationary GPs), 
there is no notion of a marginal mean nor variance in the IGP variograms.  On average, $[Z(\mathbf{s})-Z(\mathbf{s}+\mathbf{d})]^2$ grows as $||\mathbf{d}||$ grows in IGP variograms, whereas it approaches $2\sigma_z^2$ for large $||\mathbf{d}||$ in the stationary cases.   We observe these features in the stationary GP and  IGP random samples plotted in columns 2 and 3, respectively. (Note, to sample and plot IGP realizations in Fig.\ref{fig:fig1}, we initially force $Z(0)=0$ before producing the draws.  We then shift these draws so that most of them fit in the plotting region and can be compared visually to stationary GP samples. )
  
From plotting the IGP realizations, it is apparent that the large scale behavior of these IGPs differ from their stationary counterparts in that the intrinsic random trends can persist over long distances and there is no ``pull'' to an overall mean.  This results in noticeable differences  when we combine  IGPs with observations $Y$ in a statistical model to make predictions $Z(\mathbf{t})$ and at unobserved locations $\mathbf{t}$ a posteriori.
\begin{figure}[ht]
  \centerline{
   \includegraphics[width=5.2in,angle=0] {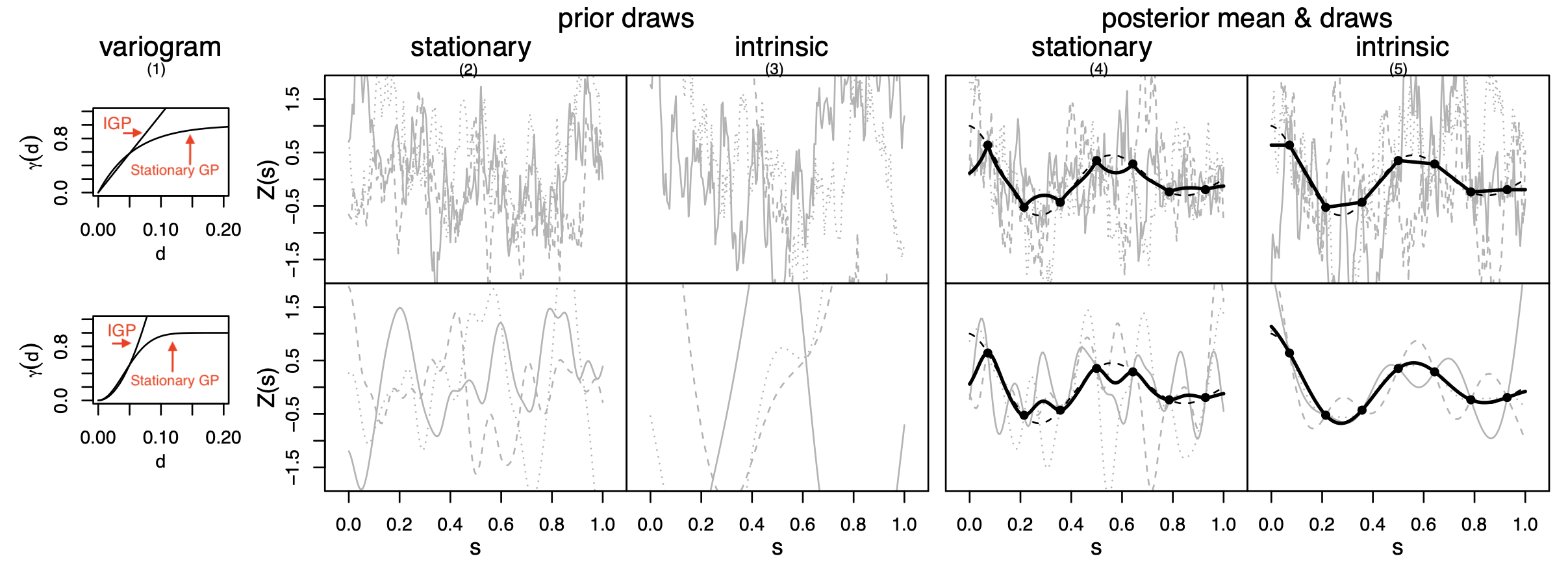}}
   \caption{\label{fig:fig1} Gaussian process-based interpolation of 1-d surfaces using  stationary and intrinsic GP formulations.  Variograms,  random samples, and posterior GP means given 7 data points (black dots) are plotted in column 1, 2-3, and 4-5 respectively.   The top row corresponds to GPs with rough, nowhere-differentiable realizations, one stationary and one intrinsic GP.  The bottom row corresponds to GPs that produce smooth realizations.  The stationary variogram parameters are specified to match the example in \cite{joseph2006limit}; the intrinsic variograms are specified to match the stationary variograms at 0 and .05. Notice posterior mean behavior between sampled points in columns 4 and 5.  Posterior model estimates (black lines) when assuming stationary GPs a priori (column 4) pull toward 0, the prior mean.  Whereas, posterior model estimates (black lines) when assuming IGPs a priori (column 5), do not.}
\end{figure}
  
As described in \sect \ref{sec:GPreg} and with using  bracket notation (defined in \cite{wikle2003hierarchical}), we take $[Z]$ to denote the prior (GP or IGP) for $Z(s)$; we take $[Y|Z]$ to denote the data model -- how the observations $Y$ are produced given $Z(s)$--, and apply  
Bayes rule to produce the posterior distribution for $Z$ given the observations $Y$: 
$[Z|Y] \propto [Y|Z][Z]$.  For simple examples, we plot realizations from a $[Z|Y]$ (gray lines) in  the $4^{\rm th}$ and $5^{\rm th}$ columns of Fig.\ref{fig:fig1} given 7 observations $Y$ (black dots). The posterior mean of $[Z|Y]$, sometimes called the {\em kriging} estimate, is drawn by the black lines in the 
$4^{\rm th}$ and $5^{\rm th}$ columns of Fig.\ref{fig:fig1}.  

Notice, all  of the posterior mean predictions in Fig.\ref{fig:fig1} interpolate the data $Y$.  However, those from the stationary GP formulations (4th column of Fig.\ref{fig:fig1} ) 
``sag'' towards zero, the prior mean of the GP between observations.  This is not always ideal in practice and  is due to specifying a stationary prior for $Z(s)$ with short range  spatial dependence.   Joseph  \cite{joseph2006limit} overcomes the sagging by introducing  rational Kriging \cite{joseph2025rational}; we do the same via assuming IGPs a priori.    Posterior process means derived from  IGP formulations do not revert to prior means.  Additionally, the approach we present maintains the benefits of modeling probabilistically in that our methods also result in joint, probabilistic
quantification of prediction uncertainty which allows for predictive simulations.  

 Inspired by Joseph's work (which we summarize and reframe into a  variogram setting in the next section, \sect \ref{sec:Josef}),  we seek in this paper to explore the connection between rational interpolation and posteriors resulting from ($0^{\rm th}$-order) IGP priors.    We draw from the classical literature on IGPs \cite{matheron1973intrinsic,journel1976mining,cressie2015statistics} as well as show important properties of variograms in \sect \ref{sec:VarioPriors} to discuss parallels between IGP models and rational estimation in \sect \ref{sec:GPreg}.  
This section also highlights features of GP regression with IGP priors and develops computational approaches for IGP simulation.  Even though IGPs have fewer parameters than stationary GPs, IGPs have shown themselves to be challenging to estimate and sample. Thus, we offer  computational guidelines for generating posterior means and realizations from  IGP-based posterior distributions in the context of simulating satellite data.    We conclude the paper with a  short discussion in \sect \ref{sec:summary}, and hope  to stimulate new directions and applications for IGPs and rational estimation.

\singlespacing

\section{Rational estimators recast with variograms}
\label{sec:Josef}

Simple kriging is an approach to GP regression that assumes a second‑order stationary GP model with a (known) constant mean, producing as a result a Best Linear Unbiased Predictor (BLUP). This predictor can show ``mean reversion.”  That is, away from data, predictions may be pulled toward the given global mean, which may distort the predictor significantly if the constant‑mean assumption is either wrong or mis-specified or likewise if the correlation parameters are mis-specified. This is particularly problematic when the underlying process has a trend, in which case assuming a global constant prior mean clearly will be unjustified.  To address these concerns, Joseph \cite{joseph2006limit} proposed modifying the predictor so that its local behavior is driven by nearby data rather than by a fixed prior mean.

Suppose for the moment that $Z(\mathbf{s})$ is stationary with a constant mean $\mu$. Define the standard deviation function for $Z(\mathbf{s})$ as $\varsigma(\mathbf{s})=\sqrt{\mathbb{E}[(Z(\mathbf{s})-\mu)^{2}]}$, and the correlation function as $R(\mathbf{s},\mathbf{t})=\mathbb{E}[(Z(\mathbf{s})-\mu)(Z(\mathbf{t})-\mu)]/(\varsigma(\mathbf{s})\varsigma(\mathbf{t}))$. 
Starting from simple Kriging with an arbitrarily assigned prior mean, \cite{joseph2006limit} defines a recursive predictor in which the ``mean" used at step $\ell+1$ is the predicted mean obtained from step $\ell$. This is similar in spirit (across a single step) to ``ordinary Kriging."  However, Joseph continues the recursion to a limit and derives  asymptotically the ``limit-Kriging" predictor, $\widehat{Z}(\mathbf{t})$: 
$$
 \widehat{Z}(\mathbf{t})=\sum_{k=1}^n \lambda_k^{\mathsf{lim}}(\mathbf{t})y_k
 \quad\mbox{with}\quad
\lambda_k^{\mathsf{lim}}(\mathbf{t})
=\frac{\mathbf{r}(\mathbf{t})^\top\mathbf{R}^{-1}\mathbf{e}_k}{\mathbf{r}(\mathbf{t})^\top\mathbf{R}^{-1}\mathbf{e}}
$$
where $y_k$ identifies observation at location $\mathbf{s}_k$ for $k\in \{1,,,n\}$; $\lambda_k^{\mathsf{lim}}(\mathbf{t})$ is the $k^{th}$ limit-Kriging coefficient; $\mathbf{e}$ is the vector of ones; and 
the components of the correlation matrix,
$\mathbf{R}\in\mathbb{R}^{n\times n}$, and the correlation vector, $\mathbf{r}(\mathbf{t})\in\mathbb{R}^{n}$, are defined in terms of the correlation function:
$\mathbf{R}_{ij}=R(\mathbf{s}_i,\mathbf{s}_j)$ and $\mathbf{r}_i(\mathbf{t})=R(\mathbf{s}_i,\mathbf{t})$. 

This predictor,  $\widehat{Z}(\mathbf{t})$, can be viewed as a ``rational function" with respect to  correlations between $Z(\mathbf{t})$ and each of $Z(\mathbf{s}_i)$ for $i=1,\ldots,n$. Indeed, the limit-Kriging predictor is precisely the ratio of two linear functions of those correlation weights, and if one allows cross-correlations to weaken, the predicted values will tend toward the value at the nearest observation point rather than to the global mean.

We reframe these ideas into terms that are sensible for intrinsic random fields, including IGPs. Noting that 
$$
\gamma(\mathbf{s},\mathbf{t})=\frac12\mathbb{E}[(Z(\mathbf{s})-Z(\mathbf{t}))^{2}]=\frac12\varsigma(\mathbf{s})^{2}+ \frac12\varsigma(\mathbf{t})^{2}-\mathbb{E}[(Z(\mathbf{s})-\mu)(Z(\mathbf{t})-\mu)],
$$
we may express the correlation function in terms of the variogram function: 
$$
R(\mathbf{s},\mathbf{t})=\frac12\frac{\varsigma(\mathbf{s})^{2}+\varsigma(\mathbf{t})^{2}}{\varsigma(\mathbf{s})\varsigma(\mathbf{t})}-\frac{\gamma(\mathbf{s},\mathbf{t})}{\varsigma(\mathbf{s})\varsigma(\mathbf{t})}.
$$
Joseph assumes the correlation function is given. Standard deviations, $\{\varsigma(\mathbf{t}),\varsigma(\mathbf{s}_1),\varsigma(\mathbf{s}_2),\ldots,\varsigma(\mathbf{s}_n)\}$are either known or remain as parameters to be estimated.  We assume access to a variogram function, $\gamma(\mathbf{s},\mathbf{t}).$ To enable comparisons, we assume that both the random field mean and variance are constant across $\Omega$. Without loss of generality, take $\mu=0$ and 
$\varsigma(\mathbf{t})=\varsigma(\mathbf{s}_1)=\varsigma(\mathbf{s}_2)=\cdots=\varsigma(\mathbf{s}_n)=\vartheta$.   With these substitutions, we express the limit-Kriging coefficients in terms of the variogram function:
\begin{equation}
    \label{limKrigCoeff}
\displaystyle
\lambda_k^{\mathsf{lim}}(\mathbf{t})
=\frac{(\vartheta^{2}\mathbf{e}-\boldsymbol{\gamma}(\mathbf{t}))^\top(\vartheta^{2}\mathbf{e}\mathbf{e}^\top-\boldsymbol{\Gamma})^{-1}\mathbf{e}_k}{(\vartheta^{2}\mathbf{e}-\boldsymbol{\gamma}(\mathbf{t}))^\top(\vartheta^{2}\mathbf{e}\mathbf{e}^\top-\boldsymbol{\Gamma})^{-1}\mathbf{e}}
\end{equation}
The limit-Kriging coefficients have a functional form reflecting rational dependence on the core variogram location parameters $\{\boldsymbol{\gamma}(\mathbf{t},\mathbf{s}_1),\boldsymbol{\gamma}(\mathbf{t},\mathbf{s}_2),\ldots,\boldsymbol{\gamma}(\mathbf{t},\mathbf{s}_n)\}$. They may be viewed, in particular,
 as a ratio of linear functions of these  parameters. 
  
In subsequent work, Joseph and Kang \cite{joseph2011regression} approach  computational fragility of Kriging in large‑$n$ (many observations) and large‑$p$ (many prediction locations) settings by grafting a regression framework that captures trends onto Shepard's general barycentric rational-form interpolation \cite{shepard1968two}.  The key component of Shepard interpolation is a distance measure $\mathbbm{d}(\mathbf{s},\mathbf{t})$ (nominally a metric) and a positive weight vector $\mathbf{c}\in\mathbb{R}^n$.   
The Shepard interpolant is then 
$
\displaystyle S(\mathbf{t})=\left.{\displaystyle \sum_{k=1}^n\frac{c_k\, y_k}{\mathbbm{d}(\mathbf{t},\mathbf{s}_k)}}\middle\slash{\displaystyle \sum_{j=1}^n\frac{c_j}{\mathbbm{d}(\mathbf{t},\mathbf{s}_j)}}\right.
$.
So long as $\mathbbm{d}(\mathbf{s},\mathbf{t})$ varies continuously with respect to $\mathbf{s},\mathbf{t}\in \mathbb{R}^d$, we have that $S(\mathbf{t})\rightarrow y_{\ell}$ as 
$\mathbbm{d}(\mathbf{t},\mathbf{s}_{\ell})\rightarrow 0$.  The Shepard interpolant is also known as an \emph{Inverse Distance Weighted} (IDW) interpolant; it has a barycentric form with respect to the separation parameters $\{\mathbbm{d}(\mathbf{t},\mathbf{s}_{1}),\mathbbm{d}(\mathbf{t},\mathbf{s}_{2}),\ldots,\mathbbm{d}(\mathbf{t},\mathbf{s}_{n})\}$: and so is in a sense in ``rational form."   Furthermore, if the distance measure $\mathbbm{d}(\mathbf{s},\mathbf{t})$ is translation invariant, so that $\mathbbm{d}(\mathbf{s},\mathbf{t})=\mathbbm{d}(\mathbf{s}+\mathbf{h},\mathbf{t}+\mathbf{h})$ for all $\mathbf{h}\in\mathbb{R}^d$, then $S(\mathbf{t})$ will be insensitive to data components related to trends and drift. For example, if $Z(\mathbf{t})$ has a trend so that  $Z(\mathbf{t})-Z(\mathbf{t}+\mathbf{h})\sim N(\Delta(\mathbf{h}),\mathbf{G}\mathbf{G}^\top)$ then $y_k+\Delta(\mathbf{h})-Z(\mathbf{s}_k+\mathbf{h})\sim N(0,\sigma^2\mathbf{F}\mathbf{F}^\top)$ and $\mathbb{E}[S(\mathbf{t})]=\mathbb{E}[S(\mathbf{t}+\mathbf{h})]$ with a similar translation invariance holding for the covariance. 

Starting from Shepard interpolation, Joseph and Kang \cite{joseph2011regression} add a polynomial regressor, $p(\mathbf{t})$, to capture trends and include an anisotropic scaling and exponential distance measure to reflect variable importance and  improve numerical behavior. Their resulting predictor is essentially a rationally normalized kernel smoother built around a regression fit.  
$$
 \widehat{Z}(\mathbf{t})=p(\mathbf{t})+\sum_{k=1}^n \lambda_k^{\mathsf{shep}}(\mathbf{t})(y_k-p(\mathbf{s}_k))
 \quad\mbox{with}\quad
\lambda_k^{\mathsf{shep}}(\mathbf{t})
=\left.{\displaystyle \frac{c_k}{\mathbbm{d}(\mathbf{t},\mathbf{s}_k)}}\middle\slash{\displaystyle \sum_{j=1}^n\frac{c_j}{\mathbbm{d}(\mathbf{t},\mathbf{s}_j)}}\right..
$$

While \cite{joseph2011regression} uses a weighted Euclidean length as a distance measure for their Shepard interpolation component, many other possibilities are evident.  The variogram function, $\gamma(\mathbf{s},\mathbf{t})$ will generally fail to satisfy the conditions of a metric, however one may still consider using it in a variant Shepard-style interpolant defined as:
$$
S_{\gamma}(\mathbf{t})=\left.{\displaystyle \sum_{k=1}^n\frac{c_k\, y_k}{\gamma(\mathbf{t},\mathbf{s}_k)}}\middle\slash{\displaystyle \sum_{j=1}^n\frac{c_j}{\gamma(\mathbf{t},\mathbf{s}_j)}}\right..
$$
 $S_{\gamma}(\mathbf{t})$ shares the same insensitivity to trends and drift as does the Shepard interpolant with  translation invariant metric.  While introducing the variogram function as a surrogate distance measure may make $S_{\gamma}$ a natural candidate to use for  prediction in an intrinsic GP setting, our focus will shift in a different direction.  

Notice that the stated motivation of \cite{joseph2011regression} is different from that of the earlier work \cite{joseph2006limit} in that  \cite{joseph2011regression} focusses  on computational issues  arising in Kriging, especially for large numbers of observations or  large numbers of predictors. Joseph and Kang's proposed solution in \cite{joseph2011regression} is built around a rational-form Shepard-style interpolant, which they find to be a scalable alternative to traditional Kriging though it takes quite a different form from the earlier limit-Kriging of \cite{joseph2006limit}. Nonetheless, there are similarities in the two approaches: interpolation of regression residuals used to capture trends in \cite{joseph2011regression} has a similar flavor to limit-Kriging’s ``localization” goal in \cite{joseph2006limit}. Also, the purely distance‑based, Shepard-style interpolation of residuals in \cite{joseph2011regression} does not contain any global mean to which predictions can revert in sparse-data regions. Thus, residual correction is entirely controlled by Shepard-style interpolation and not by an implicitly stationary GP.  As a consequence, away from data the predictor will tend toward the regression surface rather than toward a global mean.

Joseph in \cite{joseph2025rational} unifies and extends ideas presented in \cite{joseph2006limit} and \cite{joseph2011regression}, introducing an umbrella approach he calls ``rational Kriging."  He allows a nonstationary covariance 
and seeks a minimal mean-square-loss linear predictor under the constraint that the coefficients sum to one.  
The derived Kriging predictor is a  a convex combination of observations and has a rational form with respect to kernel evaluations — essentially the ratio of two kernel expansions.   

The ``rational Kriging" predictor is defined as
$$
\widehat{Z}(\mathbf{t})=\sum_{k=1}^n \lambda_k^{\mathsf{rat}}(\mathbf{t})y_k
 \quad\mbox{with}\quad\lambda_k^{\mathsf{rat}}(\mathbf{t})
=\frac{\mathbf{r}(\mathbf{t})^\top\mathbf{R}^{-1}\mathbf{e}_k (\mathbf{e}_k^\top\mathbf{R}\mathbf{c}) }{\mathbf{r}(\mathbf{t})^\top\mathbf{c}}
$$
where $\mathbf{c}> \mathbf{0}$ elementwise. Joseph considers a variety of choices for $\mathbf{c}$, among them the Perron eigenvector for $\mathbf{R}$. Choosing $\mathbf{c}=\mathbf{R}^{-1}\mathbf{e}$ recovers the limit-Kriging predictor, though in this case one cannot guarantee in advance that $\mathbf{c}>\mathbf{0}$ elementwise. 
All three families of Kriging coefficients produce predictors that are affine combinations of observations: $\displaystyle \sum_{k=1}^n\lambda_k^{\mathsf{lim}}(\mathbf{t})=\sum_{k=1}^n\lambda_k^{\mathsf{shep}}(\mathbf{t})=\sum_{k=1}^n\lambda_k^{\mathsf{rat}}(\mathbf{t})=1$. Both the Shepard/IDW and the rational Kriging coefficients have the added property of nonnegativity, $\lambda_k^{\mathsf{shep}}(\mathbf{t})\geq 0$  and $\lambda_k^{\mathsf{rat}}(\mathbf{t})\geq 0$, so both of the associated  predictors yield convex combinations of observations.  Recasting the rational Kriging coefficients in terms of the variogram function, we find:
$$
\displaystyle
\lambda_k^{\mathsf{rat}}(\mathbf{t})
=\frac{(\vartheta^{2}\mathbf{e}-\boldsymbol{\gamma}(\mathbf{s}_k))^\top\mathbf{c}}{(\vartheta^{2}\mathbf{e}-\boldsymbol{\gamma}(\mathbf{t}))^\top\mathbf{c}}\,
\left(\vartheta^{2}\mathbf{e}-\boldsymbol{\gamma}(\mathbf{t})\right)^\top\left(\vartheta^{2}\mathbf{e}\mathbf{e}^\top-\boldsymbol{\Gamma}\right)^{-1}\mathbf{e}_k. 
$$ 
We fix the variance at $\vartheta^{2}=1$ and write  $R(\mathbf{s},\mathbf{t})=
1-\boldsymbol{\gamma}(\mathbf{s},\mathbf{t})$.  
Stationarity implies that the variogram has a sill at $\vartheta^{2}=1$, and under these working assumptions the variogram function will saturate at the sill, $\gamma(\mathbf{s},\mathbf{t})\approx 1$, with large separation distances, $\|\mathbf{s}-\mathbf{t}\|\gg 0$.
This motivates introducing a surrogate variogram function, $\hat{\boldsymbol{\gamma}}(\mathbf{s},\mathbf{t})$, that will \emph{not} have a sill but rather is designed so as to increase the dynamic range of $\boldsymbol{\gamma}$ in the sense that $\hat{\boldsymbol{\gamma}}$ will reflect similar behaviour to that of  $\boldsymbol{\gamma}$ at small separation distances (in which case $\hat{\boldsymbol{\gamma}}\approx\boldsymbol{\gamma}$), while not saturating at large separation distances. To that end, we define $\hat{\boldsymbol{\gamma}}(\mathbf{s},\mathbf{t})=\varrho\frac{\boldsymbol{\gamma}(\mathbf{s},\mathbf{t})}{1-\boldsymbol{\gamma}(\mathbf{s},\mathbf{t})}$, where $\varrho>0$ is a scale factor related to the rate of growth of $\frac{\boldsymbol{\gamma}(\mathbf{s},\mathbf{t})}{1-\boldsymbol{\gamma}(\mathbf{s},\mathbf{t})}$ as the separation distance, $\|\mathbf{s}-\mathbf{t}\|$ grows. $\hat{\boldsymbol{\gamma}}(\mathbf{s},\mathbf{t})$ will range across $[0,\infty)$ as $\|\mathbf{s}-\mathbf{t}\|$ increases.

 We wish to consider the behaviour of the estimator as $\varrho$ varies and becomes small, so we will assume that $\hat{\boldsymbol{\gamma}}(\mathbf{s},\mathbf{t})$  varies independently of $\varrho$.
One may observe immediately that as a consequence, the variogram function, $\boldsymbol{\gamma}(\mathbf{s},\mathbf{t})$, and the correlation function, $R(\mathbf{s},\mathbf{t})$, will both vary with $\varrho$, and 
$$
\boldsymbol{\gamma}(\mathbf{s},\mathbf{t})=\frac{\frac1{\varrho}\hat{\boldsymbol{\gamma}}(\mathbf{s},\mathbf{t})}{1+\frac1{\varrho}\hat{\boldsymbol{\gamma}}(\mathbf{s},\mathbf{t})}
\quad\mbox{and}\quad
R(\mathbf{s},\mathbf{t})=
1-\boldsymbol{\gamma}(\mathbf{s},\mathbf{t})=\frac1{1+\frac1{\varrho}\hat{\boldsymbol{\gamma}}(\mathbf{s},\mathbf{t})}.
$$ 

This reformulation amounts to a  change in the variogram/correlation model that reveals a ``rational" dependence between $\hat{\boldsymbol{\gamma}}(\mathbf{s}_i,\mathbf{t})$ --- a dependence that can be framed in barycentric form.  To see this note first that  
$$
\mathbf{R}_{ij}
=\frac1{1+\frac1{\varrho}\hat{\boldsymbol{\gamma}}(\mathbf{s}_i,\mathbf{s}_j)}
\quad \mbox{and}\quad 
\mathbf{r}_i(\mathbf{t})=\frac1{1+\frac1{\varrho}\hat{\boldsymbol{\gamma}}(\mathbf{t},\mathbf{s}_i)}
$$
Hence for small values of $\varrho$, $\mathbf{R}=\mathbf{I}+\mathcal{O}(\varrho)$,   
$$
\frac{\mathbf{r}_i(\mathbf{t})}{\mathbf{r}(\mathbf{t})^\top\mathbf{c}}=\frac{\frac1{\varrho+\hat{\boldsymbol{\gamma}}(\mathbf{t},\mathbf{s}_i)}}{\displaystyle \sum_{\ell=1}^n\frac{c_{\ell}}{\varrho+\hat{\boldsymbol{\gamma}}(\mathbf{t},\mathbf{s}_i)}},
\quad\mbox{and}\quad
\mathbf{e}_k^\top\mathbf{R}\mathbf{c}=c_k+\varrho\sum_{\substack{\ell=1\\\ell\neq k}}^n\frac{c_{\ell}}{\varrho+\hat{\boldsymbol{\gamma}}(\mathbf{s}_k,\mathbf{s}_i)}.
$$
One may recast Joseph's rational Kriging coefficients  in terms of the surrogate variogram, $\hat{\boldsymbol{\gamma}}$: 
$$
\lambda_k^{\mathsf{rat}}(\mathbf{t})
=\frac{\mathbf{r}(\mathbf{t})^\top\mathbf{R}^{-1}\mathbf{e}_k (\mathbf{e}_k^\top\mathbf{R}\mathbf{c}) }{\mathbf{r}(\mathbf{t})^\top\mathbf{c}}
=\frac{\frac{c_k}{\varrho+\hat{\boldsymbol{\gamma}}(\mathbf{t},\mathbf{s}_k)}+\mathcal{O}(\varrho)}{\displaystyle \sum_{\ell=1}^n\frac{c_{\ell}}{\varrho+\hat{\boldsymbol{\gamma}}(\mathbf{t},\mathbf{s}_{\ell})}}\quad\stackrel{\varrho\rightarrow 0}{\longrightarrow} \quad \frac{\frac{c_k}{\hat{\boldsymbol{\gamma}}(\mathbf{t},\mathbf{s}_k)}}{\displaystyle \sum_{\ell=1}^n\frac{c_{\ell}}{\hat{\boldsymbol{\gamma}}(\mathbf{t},\mathbf{s}_{\ell})}},
$$
so that in the limit of $\varrho\rightarrow 0$, the rational Kriging estimator approaches a form of Shepard/IDW Interpolation: 
$$
\widehat{Z}(\mathbf{t})=\left.{\displaystyle \sum_{k=1}^n\frac{c_k\ y_k}{\hat{\boldsymbol{\gamma}}(\mathbf{t},\mathbf{s}_k)}}\middle\slash{\displaystyle \sum_{\ell=1}^n\frac{c_{\ell}}{\hat{\boldsymbol{\gamma}}(\mathbf{t},\mathbf{s}_{\ell})}}\right.
$$

Joseph's ideas, recast with variograms in this way, illuminate connections with approaches that utilize prior IGPs.   In particular, we find in \sect \ref{sec:funcKrig} useful similarities in the Kriging coefficient forms. 

\section{Variogram Properties}
\label{sec:VarioPriors}

In passing from rational Kriging  described in \sect \ref{sec:Josef} to IGP regression in \sect \ref{sec:GPreg}, we highlight helpful properties of variograms and the covariances they induce. 

\subsection{Variograms are conditionally negative definite}

If $\{Z(\mathbf{s})|\mathbf{s}\in\Omega\}$ is a random field on $\Omega\subset\mathbb{R}^d$, the corresponding \textbf{{(semi-)} variogram} is a function $\gamma:\Omega\times\Omega\rightarrow \mathbb{R}$ defined as $\gamma(\mathbf{s},\mathbf{t})=\frac12\mathbb{E}[(Z(\mathbf{s})-Z(\mathbf{t}))^2]$. If  $\gamma(\mathbf{s},\mathbf{t})$ is jointly continuous in $\mathbf{s}$ and $\mathbf{t}$ then $Z(\mathbf{s})$ is said to be mean-square continuous. 
A key property of variograms is conditional negative-definiteness: 
\begin{definition}[]
Suppose that $\Omega$ is a domain in $\mathbb{R}^d$, and let $g:\Omega\times\Omega\rightarrow \mathbb{R}$
 be a given function.  $g$ is said to be \emph{conditionally negative-definite} if 
     for all choices of 
    $\{\mathbf{s}_1,\mathbf{s}_2,\ldots,\mathbf{s}_n\}\subset \Omega$ and scalars $\{\alpha_1,\alpha_2,\ldots,\alpha_n\}\subset \mathbb{R}$ such that $\displaystyle \sum_{k=1}^n\alpha_k=0$, 
    it happens that
    $\displaystyle \sum_{k,\ell=1}^n\alpha_k\,\alpha_\ell \,g(\mathbf{s}_k,\mathbf{s}_{\ell})\leq 0$.  
\end{definition}

\begin{theorem}
Variograms are conditionally negative definite.  Furthermore, for any  $\mathbf{t}\in\Omega$ distinct from a fixed set of distinct points, $\{\mathbf{s}_1,\mathbf{s}_2,\ldots,\mathbf{s}_n\}\subset \Omega$, suppose 
{\small
\begin{equation}
    \label{varDef}
\boldsymbol{\gamma}(\mathbf{t})=\begin{pmatrix}
\gamma(\mathbf{s}_1,\mathbf{t})\\
\gamma(\mathbf{s}_2,\mathbf{t})\\
\vdots\\
\gamma(\mathbf{s}_n,\mathbf{t})
\end{pmatrix},\ 
\mathbf{e}=\begin{pmatrix}
1\\ 1\\ \vdots\\ 1 \end{pmatrix}\in\mathbb{R}^{n},
\ \mbox{and} \ 
\boldsymbol{\Gamma}=\begin{bmatrix}
0 & \gamma(\mathbf{s}_1,\mathbf{s}_2) &\ldots & \gamma(\mathbf{s}_1,\mathbf{s}_n)\\
\gamma(\mathbf{s}_2,\mathbf{s}_1) & 0 &\ldots & \gamma(\mathbf{s}_2,\mathbf{s}_n)\\
\vdots & & \ddots & \vdots\\
\gamma(\mathbf{s}_n,\mathbf{s}_1) & \gamma(\mathbf{s}_n,\mathbf{s}_2) &\ldots & 0
\end{bmatrix}.
\end{equation}
}
Then the matrix $\boldsymbol{\gamma}(\mathbf{t})\,\mathbf{e}^{\!\top}+\mathbf{e}\,\boldsymbol{\gamma}(\mathbf{t})^{\!\top}-\boldsymbol{\Gamma}$ is positive semi-definite. 
 \end{theorem}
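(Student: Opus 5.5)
The plan is to prove both claims with a single identity, obtained by expanding the squares in the definition of the variogram. For any three points $\mathbf{s},\mathbf{u},\mathbf{t}\in\Omega$ we write $Z(\mathbf{s})-Z(\mathbf{u})=(Z(\mathbf{s})-Z(\mathbf{t}))-(Z(\mathbf{u})-Z(\mathbf{t}))$. Squaring and taking expectations gives
\[
\mathbb{E}\big[(Z(\mathbf{s})-Z(\mathbf{t}))(Z(\mathbf{u})-Z(\mathbf{t}))\big]=\gamma(\mathbf{s},\mathbf{t})+\gamma(\mathbf{u},\mathbf{t})-\gamma(\mathbf{s},\mathbf{u}).
\]
This uses only the existence of second moments of increments, which is exactly what the definition of $\gamma$ presupposes. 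No mean or variance of $Z$ itself is needed, so the argument applies to intrinsic fields. We also use $\gamma(\mathbf{s},\mathbf{s})=0$ and the symmetry of $\gamma$, both immediate from the definition.

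For the second claim, we apply the identity with $\mathbf{s}=\mathbf{s}_k$ and $\mathbf{u}=\mathbf{s}_\ell$. Define the random vector $\mathbf{W}$ with entries $W_k=Z(\mathbf{s}_k)-Z(\mathbf{t})$. Then the $(k,\ell)$ entry of $\boldsymbol{\gamma}(\mathbf{t})\mathbf{e}^\top+\mathbf{e}\boldsymbol{\gamma}(\mathbf{t})^\top-\boldsymbol{\Gamma}$ equals $\mathbb{E}[W_kW_\ell]$. The diagonal entries are consistent, since they equal $2\gamma(\mathbf{s}_k,\mathbf{t})=\mathbb{E}[W_k^2]$ and the diagonal of $\boldsymbol{\Gamma}$ is zero. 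The matrix is therefore the second-moment matrix $\mathbb{E}[\mathbf{W}\mathbf{W}^\top]$, and for any $\boldsymbol{\beta}\in\mathbb{R}^n$ we get $\boldsymbol{\beta}^\top\mathbb{E}[\mathbf{W}\mathbf{W}^\top]\boldsymbol{\beta}=\mathbb{E}[(\boldsymbol{\beta}^\top\mathbf{W})^2]\ge 0$. This gives positive semi-definiteness. The hypothesis that $\mathbf{t}$ is distinct from the $\mathbf{s}_k$ is not actually needed for this step.

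For conditional negative definiteness, take any $\alpha_1,\ldots,\alpha_n$ with $\sum_k\alpha_k=0$, and use an arbitrary reference point $\mathbf{t}$, for instance $\mathbf{t}=\mathbf{s}_1$. Then
\[
\sum_{k,\ell}\alpha_k\alpha_\ell\big(\gamma(\mathbf{s}_k,\mathbf{t})+\gamma(\mathbf{s}_\ell,\mathbf{t})\big)=2\Big(\sum_\ell\alpha_\ell\Big)\Big(\sum_k\alpha_k\gamma(\mathbf{s}_k,\mathbf{t})\Big)=0 .
\]
So $-\sum_{k,\ell}\alpha_k\alpha_\ell\gamma(\mathbf{s}_k,\mathbf{s}_\ell)=\boldsymbol{\alpha}^\top\big(\boldsymbol{\gamma}(\mathbf{t})\mathbf{e}^\top+\mathbf{e}\boldsymbol{\gamma}(\mathbf{t})^\top-\boldsymbol{\Gamma}\big)\boldsymbol{\alpha}\ge 0$ by the previous step. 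The same conclusion also follows directly: $\mathbb{E}\big[(\sum_k\alpha_kZ(\mathbf{s}_k))^2\big]=-\sum_{k,\ell}\alpha_k\alpha_\ell\gamma(\mathbf{s}_k,\mathbf{s}_\ell)$, where the left side is well defined because $\sum_k\alpha_kZ(\mathbf{s}_k)=\sum_k\alpha_k(Z(\mathbf{s}_k)-Z(\mathbf{t}))$ is an increment combination.

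The only delicate point is rigor in the intrinsic setting. We must make sure that every expectation written involves only increments, so that no unjustified covariance of $Z$ is invoked. For simplicity I will also assume the increments have zero mean. If they have nonzero mean, $\gamma$ as defined still equals half the second moment of the increment, and the identity remains valid, because it is purely algebraic in second moments.
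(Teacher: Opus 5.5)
Your argument is correct, but it runs the implications in the opposite order from the paper.

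The paper first proves conditional negative definiteness directly from the field, by expanding $\gamma(\mathbf{s}_i,\mathbf{s}_j)$ around the reference value $Z(\mathbf{s}_1)$. This is essentially your ``direct'' remark. It then gets positive semidefiniteness purely algebraically. It applies conditional negative definiteness on the augmented set $\{\mathbf{t},\mathbf{s}_1,\ldots,\mathbf{s}_n\}$ to the zero-sum vectors $\mathbf{a}=\mathbf{D}^\top\mathbf{x}$. As a result, $\boldsymbol{\gamma}(\mathbf{t})\mathbf{e}^\top+\mathbf{e}\boldsymbol{\gamma}(\mathbf{t})^\top-\boldsymbol{\Gamma}=-\mathbf{D}\left[\begin{smallmatrix}0&\boldsymbol{\gamma}(\mathbf{t})^\top\\ \boldsymbol{\gamma}(\mathbf{t})&\boldsymbol{\Gamma}\end{smallmatrix}\right]\mathbf{D}^\top$ inherits semidefiniteness by congruence.

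You instead take semidefiniteness as the primitive fact. You identify the matrix as the second-moment (Gram) matrix of the increments $\mathbf{W}=\mathbf{D}\,\mathbb{Z}$, which the paper records only as a remark after its proof. You then deduce conditional negative definiteness algebraically, because the rank-one terms vanish on $\mathbf{e}^\perp$.

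The two routes buy different things.
\begin{itemize}
\item The paper's route shows that the second claim needs nothing beyond the first. So it holds verbatim for any symmetric, zero-diagonal conditionally negative definite function, with no appeal to an underlying field.
\item Your route is shorter. It also makes the probabilistic meaning of the matrix explicit from the start: it is the increment covariance used later in the process model.
\end{itemize}
Each of the two algebraic steps is valid for such kernels. So the two arguments together show that conditional negative definiteness is equivalent to semidefiniteness of all the $\mathbf{t}$-centered matrices.

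One small point in your favor: your identity $\mathbb{E}[(Z(\mathbf{s})-Z(\mathbf{t}))(Z(\mathbf{u})-Z(\mathbf{t}))]=\gamma(\mathbf{s},\mathbf{t})+\gamma(\mathbf{u},\mathbf{t})-\gamma(\mathbf{s},\mathbf{u})$ is the correct one. The version displayed after the paper's proof carries a spurious factor $\frac12$. That factor is inconsistent with the paper's own statement that the matrix equals $\mathsf{cov}(\mathbf{D}\,\mathbb{Z})$.
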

\noindent
\emph{Proof:}
For conciseness, denote $b_k=Z(\mathbf{s}_k)$ for $k=1,\ldots,n$ and choose scalars $\{\alpha_1,\alpha_2,\ldots,\alpha_n\}\subset \mathbb{R}$ such that  $ \sum_{k=1}^n\alpha_k=0$. We have then
\begin{align*}
\sum_{i,j=1}^n\alpha_i\,\alpha_j \,&\gamma(\mathbf{s}_i,\mathbf{s}_j)= \frac12 \sum_{i,j=1}^n\alpha_i\,\alpha_j \,\mathbb{E}[\left((b_i-b_1)-(b_j-b_1)\right)^2]\\
&= \frac12 \underbrace{\sum_{i,j=1}^n\alpha_i\,\alpha_j \,\mathbb{E}[(b_i-b_1)^2]}_{=0} +\frac12 \underbrace{\sum_{i,j=1}^n\alpha_i\,\alpha_j \,\mathbb{E}[(b_j-b_1)^2]}_{=0}\\
&\qquad\qquad -\sum_{i,j=2}^n\alpha_i\,\alpha_j \, \mathbb{E}[(b_i-b_1)(b_j-b_1)]\\
&= -\mathbb{E}\left[\sum_{i=2}^n\alpha_i\, (b_i-b_1)\cdot \sum_{j=2}^n\alpha_j(b_j-b_1)\right]=-\mathbb{E}\left[\left|\sum_{i=2}^n\alpha_i(b_i-b_1)\right|^2\right]\leq 0,
\end{align*}
which establishes the first claim.  Refering to quantities defined in \eqref{varDef},
the conditional negative semidefiniteness of $\gamma$ can be stated concisely as $\mathbf{a}^\top\boldsymbol{\Gamma}\mathbf{a}\leq 0$ whenever $\mathbf{a}\in\mathbb{R}^n$ satisfies $\mathbf{a}^\top\mathbf{e}=0$. Notice that  $\begin{bmatrix}
    0 & \boldsymbol{\gamma}(\mathbf{t})^{\!\top} \\
    \boldsymbol{\gamma}(\mathbf{t}) & \boldsymbol{\Gamma}
\end{bmatrix}$ is a variogram matrix evaluated on an augmented set of points $\{\mathbf{t},\mathbf{s}_1,\mathbf{s}_2,\ldots,\mathbf{s}_n\}\subset \Omega$.
As a consequence, we have $\mathbf{a}^\top\begin{bmatrix}
    0 & \boldsymbol{\gamma}(\mathbf{t})^{\!\top} \\
    \boldsymbol{\gamma}(\mathbf{t}) & \boldsymbol{\Gamma}
\end{bmatrix}\mathbf{a}\leq 0$ whenever $\mathbf{a}\in\mathbb{R}^{n+1}$ satisfies $\mathbf{a}^\top\begin{pmatrix}
    1\\ \mathbf{e}
\end{pmatrix}=0$. 
Suppose that $\mathbf{x}\in\mathbb{R}^n$ is chosen arbitrarily and for $\mathbf{D}=\left[\begin{array}{ccccc} -1&1 & 0 &\cdots& 0\\ -1&0 &1 &  & 0 \\
\vdots & & & \ddots & \\ -1&0 & 0 &\cdots& 1\end{array}\right]\in\mathbb{R}^{n\times(n+1)}$ define   $\mathbf{a}=\mathbf{D}^\top\mathbf{x}=\begin{pmatrix}
    -\mathbf{e}^\top\mathbf{x}\\ \mathbf{x}
\end{pmatrix}$. Then clearly $\mathbf{a}^\top\begin{pmatrix}
    1\\ \mathbf{e}
\end{pmatrix}=0$ and we may conclude that
$$
\mathbf{x}^\top(\boldsymbol{\Gamma}-\boldsymbol{\gamma}(\mathbf{t})\,\mathbf{e}^{\!\top}-\mathbf{e}\,\boldsymbol{\gamma}(\mathbf{t})^{\!\top})\mathbf{x}=\mathbf{x}^\top\mathbf{D}\begin{bmatrix}
    0 & \boldsymbol{\gamma}(\mathbf{t})^{\!\top} \\
    \boldsymbol{\gamma}(\mathbf{t}) & \boldsymbol{\Gamma}
\end{bmatrix}\mathbf{D}^T\mathbf{x}=\mathbf{a}^\top\begin{bmatrix}
    0 & \boldsymbol{\gamma}(\mathbf{t})^{\!\top} \\
    \boldsymbol{\gamma}(\mathbf{t}) & \boldsymbol{\Gamma}
\end{bmatrix}\mathbf{a}\leq 0,
$$ 
and so, $\boldsymbol{\gamma}(\mathbf{t})\,\mathbf{e}^{\!\top}+\mathbf{e}\,\boldsymbol{\gamma}(\mathbf{t})^{\!\top}-\boldsymbol{\Gamma}=-\mathbf{D}\begin{bmatrix}
    0 & \boldsymbol{\gamma}(\mathbf{t})^{\!\top} \\
    \boldsymbol{\gamma}(\mathbf{t}) & \boldsymbol{\Gamma}
\end{bmatrix}\mathbf{D}^\top$ is positive semidefinite. $\Box$

\bigskip
The matrix $\boldsymbol{\gamma}(\mathbf{t})\,\mathbf{e}^{\!\top}+\mathbf{e}\,\boldsymbol{\gamma}(\mathbf{t})^{\!\top}-\boldsymbol{\Gamma}$ is of particular interest because it is the covariance of the vector of increments between observation locations and the given target.\\
That is, with the $\mathbf{D}$ defined above, for  $\mathbf{D}\,\mathbb{Z}=\begin{pmatrix}
        Z(\mathbf{s}_1)-Z(\mathbf{t})\\
        Z(\mathbf{s}_2)-Z(\mathbf{t})\\ \vdots \\
        Z(\mathbf{s}_n)-Z(\mathbf{t})
 \end{pmatrix}$ 
 and
 $\mathbb{Z}=\begin{pmatrix}
        Z(\mathbf{t})\\ Z(\mathbf{s}_1)\\Z(\mathbf{s}_2)\\ \vdots \\Z(\mathbf{s}_n)
\end{pmatrix},
$
we have that
$
\boldsymbol{\gamma}(\mathbf{t})\,\mathbf{e}^{\!\top}+\mathbf{e}\,\boldsymbol{\gamma}(\mathbf{t})^{\!\top}-\boldsymbol{\Gamma} =\mathsf{cov}(\mathbf{D}\,\mathbb{Z}).$
This equality can be seen after one notes first an algebraic identity: 
{\small 
$$
(Z(\mathbf{s}_i)-Z(\mathbf{t}))(Z(\mathbf{s}_j)-Z(\mathbf{t}))=\frac12\left[(Z(\mathbf{s}_i)-Z(\mathbf{t}))^{2}+(Z(\mathbf{s}_j)-Z(\mathbf{t}))^{2}-(Z(\mathbf{s}_i)-Z(\mathbf{s}_j))^{2} \right]
$$ }
then take expectations: 
{\small 
$
\mathbb{E}[(Z(\mathbf{s}_i)-Z(\mathbf{t}))(Z(\mathbf{s}_j)-Z(\mathbf{t}))]=\frac12\left[\gamma(\mathbf{s}_i,\mathbf{t})+\gamma(\mathbf{s}_j,\mathbf{t})-\gamma(\mathbf{s}_i,\mathbf{s}_j) \right].
$}  
\begin{theorem} \label{ThmCondNegDef}
   Choose locations $\{\mathbf{s}_1,\mathbf{s}_2,\ldots,\mathbf{s}_n\}\subset\Omega$ arbitrarily. Let $\boldsymbol{\Gamma}$ be the   $n\times n$ matrix having entries $\Gamma_{ij}=\gamma(\mathbf{s}_i,\mathbf{s}_j)$. Assume that $\boldsymbol{\Gamma}$ is nonsingular.
Then we have 
\begin{enumerate}
    \item[(a)] $\boldsymbol{\Gamma}$ has \emph{only one} positive eigenvalue. All remaining eigenvalues are strictly negative.
    \item[(b)] The eigenvector, $\mathbf{u}_+$, associated with the single  positive eigenvalue has single signed entries and so may be assumed to be a strictly element-wise positive vector.  
    \item[(c)] There exists $\delta>0$ sufficiently large so that $\delta\mathbf{e}\mathbf{e}^\top-\boldsymbol{\Gamma}$ is positive definite. 
\end{enumerate}
\end{theorem}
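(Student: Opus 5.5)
The plan is to use the conditional negative definiteness from the preceding theorem together with Courant--Fischer min-max arguments. I will also use that $\boldsymbol{\Gamma}$ is symmetric with zero diagonal and nonnegative entries, since $\gamma\ge 0$ by definition.

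\emph{Part (a).} Conditional negative definiteness says $\mathbf{a}^\top\boldsymbol{\Gamma}\mathbf{a}\le 0$ on the $(n-1)$-dimensional subspace $\mathbf{e}^\perp$. By Courant--Fischer, $\boldsymbol{\Gamma}$ therefore has at most one positive eigenvalue: two positive eigenvalues would give a two-dimensional positive subspace, which must meet $\mathbf{e}^\perp$ nontrivially. Nonsingularity then forces every other eigenvalue to be strictly negative. At least one eigenvalue is positive, because $\mathrm{tr}\,\boldsymbol{\Gamma}=0$ while $\boldsymbol{\Gamma}\neq 0$. Here nonsingularity rules out $\boldsymbol{\Gamma}=0$, and for $n=1$ nonsingularity fails outright, so we may assume $n\ge 2$. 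With a zero trace and no zero eigenvalues, not all eigenvalues can be negative.

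\emph{Part (b).} I will use a Perron--Frobenius style argument. The positive eigenvalue is $\lambda_+=\max_{\|\mathbf{x}\|=1}\mathbf{x}^\top\boldsymbol{\Gamma}\mathbf{x}$. Since $\Gamma_{ij}\ge 0$, we have $|\mathbf{u}_+|^\top\boldsymbol{\Gamma}|\mathbf{u}_+|\ge \mathbf{u}_+^\top\boldsymbol{\Gamma}\mathbf{u}_+$, so $|\mathbf{u}_+|$ is also a maximizer. Because $\lambda_+$ is simple by (a), $|\mathbf{u}_+|$ is a multiple of $\mathbf{u}_+$, and hence $\mathbf{u}_+$ is single-signed.

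Strict positivity is the main obstacle, since $\boldsymbol{\Gamma}$ may not be irreducible; for example, $\gamma(\mathbf{s}_i,\mathbf{s}_j)=0$ can occur for distinct points. I would argue as follows. Suppose $(u_+)_k=0$. Then $0=\lambda_+(u_+)_k=\sum_j\Gamma_{kj}(u_+)_j$, which forces $\Gamma_{kj}=0$ wherever $(u_+)_j>0$. Split the indices into $P$ (where $u_+>0$) and $N$ (where $u_+=0$); then $\boldsymbol{\Gamma}_{NP}=0$. Take any $k\in N$ and $j\in P$, and apply conditional negative definiteness with $\mathbf{a}=\mathbf{e}_k-\mathbf{e}_j$. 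This gives $-2\Gamma_{kj}=0\le 0$, which yields nothing. Instead I will use the variogram triangle-type fact that $\sqrt{\gamma}$ is a semimetric, being the $L^2$ distance of increments. Since $\gamma(\mathbf{s}_k,\mathbf{s}_j)=0$, we get $Z(\mathbf{s}_k)=Z(\mathbf{s}_j)$ a.s., so $\gamma(\mathbf{s}_k,\cdot)=\gamma(\mathbf{s}_j,\cdot)$. The rows $k$ and $j$ of $\boldsymbol{\Gamma}$ then coincide, contradicting nonsingularity. Hence $N=\emptyset$.

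\emph{Part (c).} Write $\mathbf{x}=\alpha\mathbf{e}/\sqrt n+\mathbf{w}$ with $\mathbf{w}\perp\mathbf{e}$. Then
$$\mathbf{x}^\top(\delta\mathbf{e}\mathbf{e}^\top-\boldsymbol{\Gamma})\mathbf{x}=\delta n\alpha^2-\alpha^2 c-2\alpha\,\mathbf{b}^\top\mathbf{w}-\mathbf{w}^\top\boldsymbol{\Gamma}\mathbf{w},$$
where $c=\mathbf{e}^\top\boldsymbol{\Gamma}\mathbf{e}/n$ and $\mathbf{b}=\boldsymbol{\Gamma}\mathbf{e}/\sqrt n$. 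The key step is that $-\mathbf{w}^\top\boldsymbol{\Gamma}\mathbf{w}\ge\mu\|\mathbf{w}\|^2$ with $\mu>0$, i.e.\ $\boldsymbol{\Gamma}$ is strictly negative definite on $\mathbf{e}^\perp$. If some nonzero $\mathbf{w}\perp\mathbf{e}$ had $\mathbf{w}^\top\boldsymbol{\Gamma}\mathbf{w}=0$, it would maximize the nonpositive form on $\mathbf{e}^\perp$, giving $\boldsymbol{\Gamma}\mathbf{w}=\beta\mathbf{e}$. Then $\beta\,\mathbf{e}^\top\boldsymbol{\Gamma}^{-1}\mathbf{e}=\mathbf{e}^\top\mathbf{w}=0$, so either $\beta=0$, contradicting nonsingularity, or $\mathbf{e}^\top\boldsymbol{\Gamma}^{-1}\mathbf{e}=0$. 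I will rule out the latter with the inertia count: by Haynsworth/Schur complement, $\mathbf{e}^\top\boldsymbol{\Gamma}^{-1}\mathbf{e}>0$ follows from (a) together with the strict negativity of the remaining eigenvalues. Alternatively, I will simply argue inertia via the Schur complement of the bordered matrix. With $\mu>0$ in hand, completing the square shows the form is positive definite once $\delta n>c+\|\mathbf{b}\|^2/\mu$.
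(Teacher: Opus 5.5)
\textbf{Parts (a) and (b).} These are correct. They avoid the assumption $\gamma(\mathbf{s},\mathbf{t})>0$ for $\mathbf{s}\neq\mathbf{t}$, which the paper's proof takes for granted in order to invoke Perron--Frobenius. In fact, your row-coincidence argument shows that nonsingularity of $\boldsymbol{\Gamma}$ forces every off-diagonal entry to be positive, so $\boldsymbol{\Gamma}$ is irreducible after all.

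\textbf{The gap in (c).} It sits exactly at the step you flag, excluding $\mathbf{e}^\top\boldsymbol{\Gamma}^{-1}\mathbf{e}=0$. The inertia of $\boldsymbol{\Gamma}$ does not by itself determine the sign of $\mathbf{e}^\top\boldsymbol{\Gamma}^{-1}\mathbf{e}$. Adding conditional negative definiteness, the Haynsworth count for $\begin{bmatrix}\boldsymbol{\Gamma}&\mathbf{e}\\ \mathbf{e}^\top&0\end{bmatrix}$ gives only $\mathbf{e}^\top\boldsymbol{\Gamma}^{-1}\mathbf{e}\geq 0$. Equality holds precisely when $\boldsymbol{\Gamma}$ is degenerate on $\mathbf{e}^\perp$: if $\mathbf{e}^\top\boldsymbol{\Gamma}^{-1}\mathbf{e}=0$, then $\mathbf{w}=\boldsymbol{\Gamma}^{-1}\mathbf{e}$ satisfies $\mathbf{e}^\top\mathbf{w}=0$ and $\mathbf{w}^\top\boldsymbol{\Gamma}\mathbf{w}=0$. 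That degeneracy is the very thing you are trying to rule out, so this route is circular.

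\textbf{Why no argument can close it.} Part (c) is false under the stated hypotheses. Take $Z(s)=\xi s$ on $\mathbb{R}$ with $\xi\sim N(0,1)$, so $\gamma(s,t)=\frac12(s-t)^2$, and use the points $0,1,2$. Then $\boldsymbol{\Gamma}$ has rows $(0,\frac12,2)$, $(\frac12,0,\frac12)$, $(2,\frac12,0)$, and $\det\boldsymbol{\Gamma}=1$. The vector $\mathbf{w}=(1,-2,1)^\top$ satisfies $\boldsymbol{\Gamma}\mathbf{w}=\mathbf{e}$ and $\mathbf{e}^\top\mathbf{w}=0$. Hence $\mathbf{e}^\top\boldsymbol{\Gamma}^{-1}\mathbf{e}=0$ and $\mathbf{w}^\top(\delta\mathbf{e}\mathbf{e}^\top-\boldsymbol{\Gamma})\mathbf{w}=0$ for every $\delta$. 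Probabilistically, the contrast $Z(0)-2Z(1)+Z(2)$ vanishes identically.

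\textbf{The paper has the same hole.} In its subcase where $\boldsymbol{\Gamma}-\mathbf{e}\boldsymbol{\gamma}(\mathbf{t})^\top$ is singular, it obtains only $(\mathbf{e}^\top\boldsymbol{\Gamma}^{-1}\mathbf{e})|\boldsymbol{\gamma}(\mathbf{t})^\top\mathbf{v}|^2\geq 0$. This example falls into that subcase for every $t$, since $\boldsymbol{\gamma}(t)^\top\boldsymbol{\Gamma}^{-1}\mathbf{e}=\frac12\bigl(t^2-2(t-1)^2+(t-2)^2\bigr)=1$.

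\textbf{What (c) actually needs.} It requires \emph{strict} conditional negative definiteness at the $\mathbf{s}_i$: no nonzero $\mathbf{w}\perp\mathbf{e}$ with $\mathbf{w}^\top\boldsymbol{\Gamma}\mathbf{w}=0$. This condition is equivalent to (c). It follows from invertibility of the increment covariance $\boldsymbol{\gamma}(\mathbf{t})\mathbf{e}^\top+\mathbf{e}\boldsymbol{\gamma}(\mathbf{t})^\top-\boldsymbol{\Gamma}$, which the paper needs anyway for $\mathbf{G}(\mathbf{t})$. Under that hypothesis your completing-the-square step finishes (c) at once with $\mu>0$, and $\mathbf{e}^\top\boldsymbol{\Gamma}^{-1}\mathbf{e}$ never enters.
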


\emph{Proof:} Note first that $\boldsymbol{\Gamma}$ is a real symmetric matrix so all its eigenvalues are real; label them in decreasing order as $\varsigma_1\geq\varsigma_2\geq\cdots\geq\varsigma_n$.
Since $\gamma(\mathbf{s},\mathbf{t})>0$ for all $\mathbf{s}\neq\mathbf{t}$,
$\boldsymbol{\Gamma}$ is an irreducible, elementwise nonnegative matrix, and the Perron-Frobenius Theorem \cite[Theorem 8.4.4, p. 534]{HornJohnson2013MatrixAnalysis} guarantees the spectral radius of the matrix is itself a simple eigenvalue  with an eigenvector having single-signed elements that we may assume to be  strictly positive.  In particular,  we must have $\varsigma_1>0$.

Conditional negative definiteness of the variogram means for $\boldsymbol{\Gamma}$ that if $\mathbf{e}=[1,1,\ldots,1]^T\in\mathbb{R}^{n}$ and $\mathbf{a}=[\alpha_1,\alpha_2,\ldots,\alpha_n]^T\in\mathbb{R}^{n}$ then whenever  $\mathbf{e}\perp \mathbf{a}$ (equivalent to $\sum_{k=1}^n\alpha_k=0$) we must have that $\mathbf{a}^\top\boldsymbol{\Gamma}\mathbf{a}\leq 0$. 
The Courant-Fisher variational characterization of the eigenvalues of $\boldsymbol{\Gamma}$ \cite[Theorem 4.2.6, p. 236]{HornJohnson2013MatrixAnalysis}  asserts  in general,that  one has $\displaystyle \varsigma_k=\min_{\dim{\mathfrak{P}}=k-1}\max_{\mathbf{v}\in\mathfrak{P}^{\perp}}\frac{\mathbf{v}^T\boldsymbol{\Gamma}\mathbf{v}}{\|\mathbf{v}\|^{2}}$. In particular for $k=2$, 
$$
\displaystyle \varsigma_2=\min_{\dim{\mathfrak{P}}=1}\max_{\mathbf{v}\in\mathfrak{P}^{\perp}}\frac{\mathbf{v}^T\boldsymbol{\Gamma}\mathbf{v}}{\|\mathbf{v}\|^{2}}\leq \max_{\mathbf{v}\perp\mathbf{e}}\frac{\mathbf{v}^T\boldsymbol{\Gamma}\mathbf{v}}{\|\mathbf{v}\|^{2}}\leq 0.$$ Nonsingularity of $\boldsymbol{\Gamma}$ guarantees $0\neq\varsigma_2$, hence $0>\varsigma_2\geq\cdots\geq\varsigma_n$.  Thus,   assertion (a) is true, and since $\varsigma_1$ is the only positive eigenvalue, assertion (b) is true as well. 

For assertion (c), note that $(\lambda,\mathbf{u})$ is an eigenvalue/eigenvector pair for $\delta\mathbf{e}\mathbf{e}^\top-\boldsymbol{\Gamma}$ if and only if 
$(\lambda\mathbf{I}+\boldsymbol{\Gamma})\mathbf{u}=\delta\mathbf{e}\mathbf{e}^\top\mathbf{u}$, and, so long as $\lambda$ is \emph{not} also an eigenvalue of $-\boldsymbol{\Gamma}$, this happens if and only if 
$\lambda$ is a root of the secular equation  $s(\lambda)=\frac1\delta-\mathbf{e}^\top(\lambda\mathbf{I}+\boldsymbol{\Gamma})^{-1}\mathbf{e}=0$. Assertion (c) can be reinterpretted as stating that all solutions to $s(\lambda)=0$ must be positive for all sufficiently large $\delta>0$.
$s(x)<0$ for all $x<-\varsigma_1<0$. 
Weyl interlacing  \cite[Corollary 4.3.9, p. 241]{HornJohnson2013MatrixAnalysis} guarantees that the smallest $\lambda$ for which $s(\lambda)=0$ must occur for $-\varsigma_1<\lambda\leq -\varsigma_2$.   Invertibility of $\boldsymbol{\Gamma}$ guarantees that  $0<-\varsigma_2$ (this is evidently a necessary condition for (c) to hold). The smallest $\lambda$ for which $s(\lambda)=0$ will be positive provided that $\mathbf{e}^\top\boldsymbol{\Gamma}^{-1}\mathbf{e}>0$ in which case, we may take $\delta>1/\mathbf{e}^\top\boldsymbol{\Gamma}^{-1}\mathbf{e}$. (See Figure \ref{fig:fig2}).

To show $\mathbf{e}^\top\boldsymbol{\Gamma}^{-1}\mathbf{e}>0$, suppose to the contrary that $\mathbf{e}^\top\boldsymbol{\Gamma}^{-1}\mathbf{e}\leq 0$.
We have shown above that the left-hand side of
\begin{equation}
\label{eqn:M}
\boldsymbol{\gamma}(\mathbf{t})\,\mathbf{e}^{\!\top}+\mathbf{e}\,\boldsymbol{\gamma}(\mathbf{t})^{\!\top}-\boldsymbol{\Gamma}=\boldsymbol{\gamma}(\mathbf{t})(\mathbf{e}^{\!\top}\boldsymbol{\Gamma}^{-1}\mathbf{e})\boldsymbol{\gamma}(\mathbf{t})^{\!\top}-\left(\boldsymbol{\Gamma}-\boldsymbol{\gamma}(\mathbf{t})\mathbf{e}^{\!\top}\right)\boldsymbol{\Gamma}^{-1}\left(\boldsymbol{\Gamma}-\mathbf{e}\boldsymbol{\gamma}(\mathbf{t})^{\!\top}\right)
\end{equation}
is positive-semidefinite; the right-hand side reorganizes the same expression 
\begin{wrapfigure}[14]{r}{0.5\textwidth}
  \centering 
\vspace{-6mm}
\includegraphics[width=0.48\textwidth] {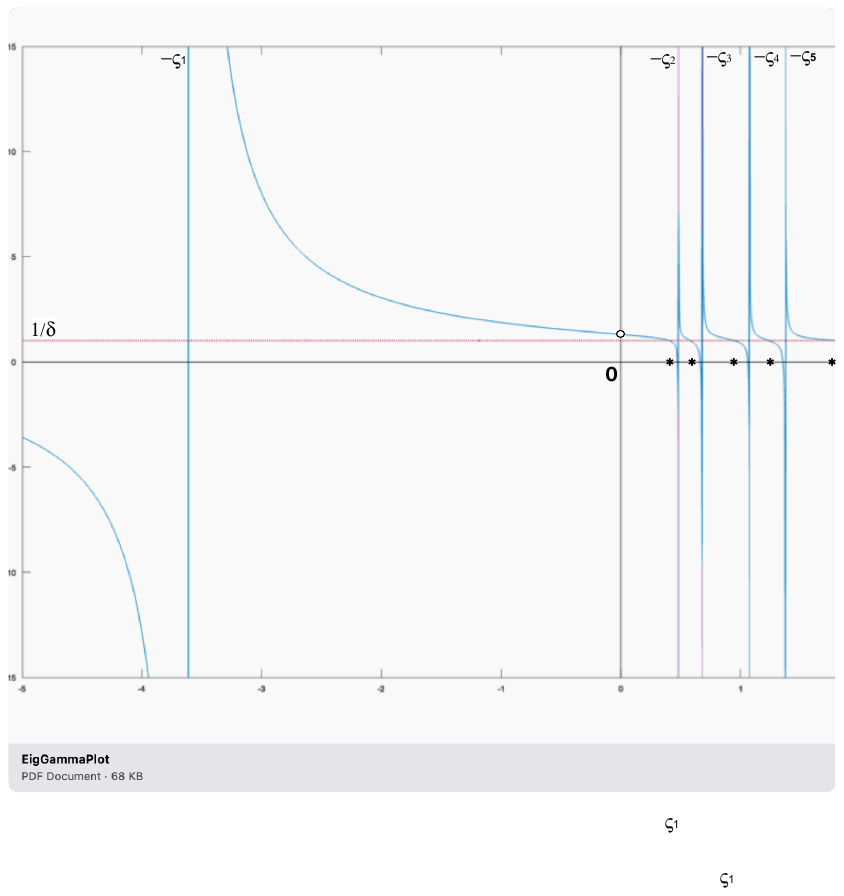}   \caption{\label{fig:fig2}
   Illustration of the argument for Theorem \ref{ThmCondNegDef} (c). The function $\mathbf{e}^\top(s\mathbf{I}+\boldsymbol{\Gamma})^{-1}\mathbf{e}$ (blue curve) has singularities at $-\varsigma_1,-\varsigma_2,\ldots$ and  intersection points with the line $y=\frac1\delta$ (dotted) determine eigenvalues of $\delta\mathbf{e}\mathbf{e}^\top-\boldsymbol{\Gamma}$ (denoted $\star$).  So long as the value of $\mathbf{e}^\top\boldsymbol{\Gamma}^{-1}\mathbf{e}>0$ ($\circ$), there will be a sufficiently large value of $\delta$ such that $\delta\mathbf{e}\mathbf{e}^\top-\boldsymbol{\Gamma}$ is positive-definite.}
\end{wrapfigure}
   and so must also be positive semidefinite. If $\boldsymbol{\Gamma}-\mathbf{e}\boldsymbol{\gamma}(\mathbf{t})^{\!\top}$ is nonsingular,  then $\left(\boldsymbol{\Gamma}-\boldsymbol{\gamma}(\mathbf{t})\mathbf{e}^{\!\top}\right)\boldsymbol{\Gamma}^{-1}\left(\boldsymbol{\Gamma}-\mathbf{e}\boldsymbol{\gamma}(\mathbf{t})^{\!\top}\right)$ has the same number of positive and negative eigenvalues as $\boldsymbol{\Gamma}$, hence it is indefinite and $\mathbf{e}^{\!\top}\boldsymbol{\Gamma}^{-1}\mathbf{e}\leq 0$ implies that the number of negative eigenvalues cannot decrease when $\boldsymbol{\gamma}(\mathbf{t})(\mathbf{e}^{\!\top}\boldsymbol{\Gamma}^{-1}\mathbf{e})\boldsymbol{\gamma}(\mathbf{t})^{\!\top}$ is added. Thus, in this case we must have $\mathbf{e}^{\!\top}\boldsymbol{\Gamma}^{-1}\mathbf{e}>0$.  If $\boldsymbol{\Gamma}-\mathbf{e}\boldsymbol{\gamma}(\mathbf{t})^{\!\top}$ is singular then take $\mathbf{v}\neq\mathbf{0}$ so that $(\boldsymbol{\Gamma}-\mathbf{e}\boldsymbol{\gamma}(\mathbf{t})^{\!\top})\mathbf{v}=\mathbf{0}$. Note that $\boldsymbol{\gamma}(\mathbf{t})^{\!\top}\mathbf{v}\neq 0$ since otherwise $\boldsymbol{\Gamma}\mathbf{v}=\mathbf{0}$ would give a contradiction to the assumed nonsingularity of $\boldsymbol{\Gamma}$. But then $\mathbf{v}^{\!\top}(\boldsymbol{\gamma}(\mathbf{t})(\mathbf{e}^{\!\top}\boldsymbol{\Gamma}^{-1}\mathbf{e})\boldsymbol{\gamma}(\mathbf{t})^{\!\top})\mathbf{v}=(\mathbf{e}^{\!\top}\boldsymbol{\Gamma}^{-1}\mathbf{e})|\boldsymbol{\gamma}(\mathbf{t})^{\!\top}\mathbf{v}|^2\geq 0$ and so we find also in this case, $\mathbf{e}^{\!\top}\boldsymbol{\Gamma}^{-1}\mathbf{e}>0$.  $\Box$

\subsection{Factoring IRF covariance matrices}\label{FactorIRF}
While the Cholesky factorization of the IRF covariance, $\mathbf{M}=\boldsymbol{\gamma}(\mathbf{t})\,\mathbf{e}^{\!\top}+\mathbf{e}\,\boldsymbol{\gamma}(\mathbf{t})^{\!\top}-\boldsymbol{\Gamma}$ (Eqn. \ref{eqn:M}), can be computed directly, one  may need to do this repeatedly for different values of $\mathbf{t}\in\Omega$ and so a preferred approach would avoid repeating redundant aspects of the factorization and ideally would avoid explicit construction of $\mathbf{M}$ while still taking advantage of its structure.  One way of doing this takes into account Theorem \ref{ThmCondNegDef}(c). Choose $\delta>0$ sufficiently large so that $\delta\mathbf{e}\mathbf{e}^\top-\boldsymbol{\Gamma}$ is positive definite and compute the Cholesky factorization, $\delta\mathbf{e}\mathbf{e}^\top-\boldsymbol{\Gamma}=\mathbf{L}_0\mathbf{L}_0^\top$ with $\mathbf{L}_0\in\mathbb{R}^{n\times n}$ lower triangular. 
Given $\mathbf{L}_0$,  one may proceed to compute cheaply multiple ``twisted" factorizations of $\delta\mathbf{e}\mathbf{e}^\top-\begin{bmatrix}
    0 & \boldsymbol{\gamma}(\mathbf{t})^{\!\top} \\
    \boldsymbol{\gamma}(\mathbf{t}) & \boldsymbol{\Gamma}
\end{bmatrix}$ for different choices of $\mathbf{t}$ by observing: \begin{align*}
\delta\mathbf{e}\mathbf{e}^\top-\begin{bmatrix}
    0 & \boldsymbol{\gamma}(\mathbf{t})^{\!\top} \\
    \boldsymbol{\gamma}(\mathbf{t}) & \boldsymbol{\Gamma}
\end{bmatrix}&=\begin{bmatrix}
    \delta & (\delta\mathbf{e}-\boldsymbol{\gamma}(\mathbf{t}))^{\!\top} \\
    \delta\mathbf{e}-\boldsymbol{\gamma}(\mathbf{t}) & \delta\mathbf{e}\mathbf{e}^\top-\boldsymbol{\Gamma}
\end{bmatrix}=\begin{bmatrix}
    \rho(\mathbf{t}) & \mathbf{r}(\mathbf{t})^\top \\
    \mathbf{0} & \mathbf{L}_0
\end{bmatrix}\begin{bmatrix}
    \rho(\mathbf{t}) & \mathbf{0} \\
   \mathbf{r}(\mathbf{t})  & \mathbf{L}_0^\top
\end{bmatrix},
\end{align*}
where $\mathbf{r}(\mathbf{t})$ solves $\mathbf{L}_0\mathbf{r}(\mathbf{t})=\delta\mathbf{e}-\boldsymbol{\gamma}(\mathbf{t})$ and $\rho(\mathbf{t})=\sqrt{\delta-\|\mathbf{r}(\mathbf{t})\|^{2}}$.
$\mathbf{L}_0$ is independent of $\mathbf{t}$ and need only be computed once. A ``twisted"  factorization may be generated for each new value of $\mathbf{t}$ with  added marginal complexity of  only $\mathcal{O}(n^{2})$ floating point operations (as opposed to $\mathcal{O}(n^{3})$ if the decomposition was computed from scratch); only $\rho(\mathbf{t})$ and $\mathbf{r}(\mathbf{t})$ need be retained. 
Recalling that $\mathbf{D}\mathbf{e}=\mathbf{0}$ with $\mathbf{D}=[-\mathbf{e}\quad \mathbf{I}]$ we have
\begin{align*}
\boldsymbol{\gamma}(\mathbf{t})\,\mathbf{e}^{\!\top}+\mathbf{e}&\,\boldsymbol{\gamma}(\mathbf{t})^{\!\top}-\boldsymbol{\Gamma}=
-\mathbf{D}\begin{bmatrix}
    0 & \boldsymbol{\gamma}(\mathbf{t})^{\!\top} \\
    \boldsymbol{\gamma}(\mathbf{t}) & \boldsymbol{\Gamma}
\end{bmatrix}\mathbf{D}^\top\\
=\mathbf{D}&\left(\delta\mathbf{e}\mathbf{e}^\top-\begin{bmatrix}
    0 & \boldsymbol{\gamma}(\mathbf{t})^{\!\top} \\
    \boldsymbol{\gamma}(\mathbf{t}) & \boldsymbol{\Gamma}
\end{bmatrix}\right)\mathbf{D}^\top
=\left(\mathbf{D}\begin{bmatrix}
\rho(\mathbf{t}) & \mathbf{r}(\mathbf{t})^\top \\
    \mathbf{0} & \mathbf{L}_0
\end{bmatrix}\right)
\left(\mathbf{D}\begin{bmatrix}
\rho(\mathbf{t}) & \mathbf{r}(\mathbf{t})^\top \\
    \mathbf{0} & \mathbf{L}_0
\end{bmatrix}\right)^\top,
\end{align*}
and we may complete a Cholesky factorization for 
$\boldsymbol{\gamma}(\mathbf{t})\,\mathbf{e}^{\!\top}+\mathbf{e}\,\boldsymbol{\gamma}(\mathbf{t})^{\!\top}-\boldsymbol{\Gamma}$ by computing a \textsf{QR} decomposition of  
$\left(\mathbf{D}\begin{bmatrix}
    \rho(\mathbf{t}) & \mathbf{r}(\mathbf{t})^\top \\
    \mathbf{0} & \mathbf{L}_0
\end{bmatrix}\right)^\top=\begin{bmatrix}
    -\rho(\mathbf{t})\mathbf{e}^\top  \\
     \mathbf{L}_0^\top- \mathbf{r}(\mathbf{t})\mathbf{e}^\top
\end{bmatrix}$.  This proceeds in two phases: first, we reduce $\mathbf{L}_0^\top- \mathbf{r}(\mathbf{t})\mathbf{e}^\top$, taking advantage of the fact that it is a rank-one modification of an upper triangular matrix:  $(n-1)$
Givens rotations are applied to annihilate  consecutive trailing entries of $\mathbf{r}(\mathbf{t})$, transforming $\mathbf{r}(\mathbf{t})\mapsto \|\mathbf{r}(\mathbf{t})\|\mathbf{e}_1$ while simultaneously mapping $\mathbf{L}_0^\top$ to an upper Hessenberg matrix. In the second phase, two additional passes are made with Givens rotations to annihilate the subdiagonals producing ultimately an upper triangular Cholesky factor. None of the $3n$ Givens rotations generated across both phases need to be retained. The computational complexity of the whole process is approximately $8n^2$ floating point operations to evaluate a Cholesky factor for each $\mathbf{t}$-location considered, as well as $n^3/3$ floating point operations needed for the initial (one time) factorization of  $\delta\mathbf{e}\mathbf{e}^\top-\boldsymbol{\Gamma}=\mathbf{L}_0\mathbf{L}_0^\top$.

Note that the initial factorization of  $\delta\mathbf{e}\mathbf{e}^\top-\boldsymbol{\Gamma}$ will generally be computed at the same time as a sufficiently large value for  $\delta$ is chosen to ensure positive definiteness.  There is some benefit in choosing $\delta$ sufficiently large so that fewer subsequent adjustments need be made in the course of factorization.   Notice that $\delta\mathbf{e}\mathbf{e}^\top-\boldsymbol{\Gamma}\leq \delta n\mathbf{I}-\boldsymbol{\Gamma}$ (with respect to the L\"{o}wner ordering for symmetric matrices), so it is necessary that $\delta n > \varsigma_1$, the largest positive eigenvalue of $\boldsymbol{\Gamma}$ (though this will not necessarily be sufficient).  The magnitude of $\varsigma_1$ is unknown but an upper bound can be easily obtained from $\frac{\mathbf{e}^\top\boldsymbol{\Gamma}\mathbf{e}}{\|\boldsymbol{\Gamma}\mathbf{e}\|^{2}}\geq \varsigma_1$ \cite{beattie1998harmonic}.  This might suggest taking $\delta=\frac1n \frac{\|\boldsymbol{\Gamma}\mathbf{e}\|^{2}}{\mathbf{e}^\top\boldsymbol{\Gamma}\mathbf{e}}$, and indeed this quantity will generally be a very good approximation to $\varsigma_1/n$. Nonetheless, we find  $\delta= \frac{\|\boldsymbol{\Gamma}\mathbf{e}\|^{2}}{\mathbf{e}^\top\boldsymbol{\Gamma}\mathbf{e}}$  often to be a better choice.  A straightforward modification of the Cholesky factorization as found in \cite{chengHigham1998modChol} will produce  additional adjustments to $\delta$ that might be required to guarantee positive definiteness.

\section{GP regression with intrinsic variogram priors}
\label{sec:GPreg}
Having established some background elements, we now discuss  GP regression with intrinsic priors in this section.   Given observations $\{y_1,y_2,\ldots,y_n\}$ of the process  $Z$ at fixed locations $\{\mathbf{s}_1,\mathbf{s}_2,\ldots,\mathbf{s}_n\}\subset\Omega\subset\mathbb{R}^d$, our goal will be to estimate with uncertainty $Z(\mathbf{t})$ at any given $\mathbf{t}\in\Omega$.  We take a Bayesian approach and assume the following hierarchical model,
{\small
\begin{align}
\mbox{\textbf{Data model} }[Y|Z]:&\quad 
    \mathbf{y}=\begin{pmatrix}
        y_1\\y_2\\ \vdots \\y_n
      \end{pmatrix}=\begin{pmatrix}
        Z(\mathbf{s}_1)\\Z(\mathbf{s}_2)\\ \vdots \\Z(\mathbf{s}_n)
\end{pmatrix}+\sigma \mathbf{F}\mathbf{u} 
\quad \mbox{where } \mathbf{u}\sim N(0,\mathbf{I}); \label{dataModel} \\ 
  &\qquad \sigma^{2}\mathbf{F}\mathbf{F}^\top \mbox{is the observation error covariance (factored)} \nonumber \\[2mm]
        \mbox{\textbf{Prior Process model} }[Z]:&\quad Z \mbox{ is a zero-order intrinsic Gaussian process (\textbf{0-IGP})} \label{processModel}\\ 
       &\quad \mbox{ with normally distributed increments and a given}\nonumber\\
       &\qquad \mbox{ variogram:  }\gamma(\mathbf{s},\mathbf{t})=\frac12\mathbb{E}[(Z(\mathbf{s})-Z(\mathbf{t}))^{2}].\nonumber
\end{align} }

\noindent
This can be rewritten compactly as
\begin{equation}
    \mbox{\textbf{Data/Process Model}: }\left\{\begin{array}{ll}
\mathbf{y}=\mathbf{X}\,\mathbb{Z}+\mathbf{F}\mathbf{u},& \mathbf{u}\sim N(0,\mathbf{I}) \\[1mm]
\mathbf{D}\,\mathbb{Z}=\mathbf{G}(\mathbf{t})\mathbf{v},& \mathbf{v}\sim N(0,\mathbf{I})
\end{array}\right.\label{eq:data-process}
\end{equation}
where $\mathbf{X}=\left[\begin{array}{ccccc} 0&1 & 0 &\cdots& 0\\ 0&0 &1 &  & 0 \\
\vdots & & & \ddots & \\ 0&0 & 0 &\cdots& 1\end{array}\right]$ 
and $\mathbf{G}(\mathbf{t})\mathbf{G}(\mathbf{t})^\top$ is a factorization of the IGP covariance, $\boldsymbol{\gamma}(\mathbf{t})\,\mathbf{e}^{\!\top}+\mathbf{e}\,\boldsymbol{\gamma}(\mathbf{t})^{\!\top}-\boldsymbol{\Gamma}$, associated with the increments $\mathbf{D}\,\mathbb{Z}=\begin{pmatrix}
        Z(\mathbf{s}_1)-Z(\mathbf{t})\\
        Z(\mathbf{s}_2)-Z(\mathbf{t})\\ \vdots \\
        Z(\mathbf{s}_n)-Z(\mathbf{t})
 \end{pmatrix}$.

With this hierarchical model in mind, we start by defining estimates for Kriging coefficients with  IGP priors in \sect \ref{sec:IGPKrigCoeff} and detail characteristics of these estimates in \sect \ref{sec:funcKrig} and \sect \ref{sec:flexIncr}.  We conclude in \sect \ref{sec:postReal} by applying  GP regression with IGP priors to assess a realistic, albeit simulated, dataset that emulates key features of \textsf{SWOT} altimetry data\footnote{ \textsf{SWOT} $=$ 
``Surface Water and Ocean Topography" is a joint NASA and CNES Earth-observing satellite mission launched in December, 2022
 designed to survey over 90\% of Earth's surface water \cite{vaze2018surface,SWOTDocs}} .  We highlight computational guidelines for both estimating and simulating IGPs a posteriori.

\subsection{IGP Kriging coefficients}
\label{sec:IGPKrigCoeff}
\noindent
The \emph{intrinsic Gaussian process (IGP) regression estimator} for $Z(\mathbf{t})$ is written in terms of observations as $\widehat{Z}(\mathbf{t})=\sum_{k=1}^n \lambda_k(\mathbf{t})\,y_k$ where $\{\lambda_k(\mathbf{t})\}_1^n$ are the \emph{IGP Kriging coefficients}.  Since $Z(\mathbf{t})=\mathbf{e}_1^T\mathbb{Z}$, the best linear unbiased estimator for $Z(\mathbf{t})$ may be directly computed: 
$$
\widehat{Z}(\mathbf{t})=\frac1{\sigma^{2}}\mathbf{e}_1^T\left(\frac1{\sigma^{2}}\mathbf{X}^T(\mathbf{F}\mathbf{F}^T)^{-1}\mathbf{X}+\mathbf{D}^T(\mathbf{G}(\mathbf{t})\mathbf{G}(\mathbf{t})^T)^{-1}\mathbf{D}\right)^{-1}\mathbf{X}^T(\mathbf{F}\mathbf{F}^T)^{-1}\mathbf{y}.
$$
Noting that $\mathbf{y}=\sum_{k=1}^n y_k\,\mathbf{e}_k$, the Kriging functions are defined by 
\begin{equation}\label{KrigingFunc}
\lambda_k(\mathbf{t})=\frac1{\sigma^{2}}\mathbf{e}_1^T\left(\frac1{\sigma^{2}}\mathbf{X}^T(\mathbf{F}\mathbf{F}^T)^{-1}\mathbf{X}+\mathbf{D}^T(\mathbf{G}(\mathbf{t})\mathbf{G}(\mathbf{t})^T)^{-1}\mathbf{D}\right)^{-1}\mathbf{X}^T(\mathbf{F}\mathbf{F}^T)^{-1}\mathbf{e}_k.
\end{equation}

\begin{theorem}
\label{Thm_AffineKriging_0IRF}
  For the data/process model given above using an IGP variogram prior, the IGP regression estimator for  $Z(\mathbf{t})$ is an affine combination of observation values. That is, $\displaystyle \widehat{Z}(\mathbf{t})=\sum_{k=1}^n \lambda_k(\mathbf{t})y_k$ and  $\displaystyle \sum_{k=1}^n \lambda_k(\mathbf{t})=1$.
\end{theorem}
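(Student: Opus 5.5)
The plan is to exploit the fact that the IGP prior places no information on constant shifts: $\mathbf{D}\,\mathbf{e}=\mathbf{0}$, where $\mathbf{e}\in\mathbb{R}^{n+1}$ is the all-ones vector and $\mathbf{D}=[-\mathbf{e}\ \ \mathbf{I}]$. Meanwhile $\mathbf{X}\,\mathbf{e}=\mathbf{e}\in\mathbb{R}^n$. Summing \eqref{KrigingFunc} over $k$ gives
$$
\sum_{k=1}^n\lambda_k(\mathbf{t})=\frac1{\sigma^2}\mathbf{e}_1^T\mathbf{A}^{-1}\mathbf{X}^T(\mathbf{F}\mathbf{F}^T)^{-1}\mathbf{e},\qquad \mathbf{A}=\frac1{\sigma^{2}}\mathbf{X}^T(\mathbf{F}\mathbf{F}^T)^{-1}\mathbf{X}+\mathbf{D}^T(\mathbf{G}(\mathbf{t})\mathbf{G}(\mathbf{t})^T)^{-1}\mathbf{D},
$$
since $\sum_k\mathbf{e}_k=\mathbf{e}$. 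So the whole claim reduces to identifying $\mathbf{A}^{-1}\mathbf{X}^T(\mathbf{F}\mathbf{F}^T)^{-1}\mathbf{e}$.

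The key step is to apply $\mathbf{A}$ to the all-ones vector in $\mathbb{R}^{n+1}$. The prior term vanishes because $\mathbf{D}\mathbf{e}=\mathbf{0}$. The data term gives $\frac1{\sigma^2}\mathbf{X}^T(\mathbf{F}\mathbf{F}^T)^{-1}\mathbf{X}\mathbf{e}=\frac1{\sigma^2}\mathbf{X}^T(\mathbf{F}\mathbf{F}^T)^{-1}\mathbf{e}$. Hence $\mathbf{A}\mathbf{e}=\frac1{\sigma^2}\mathbf{X}^T(\mathbf{F}\mathbf{F}^T)^{-1}\mathbf{e}$.

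Provided $\mathbf{A}$ is invertible, this yields $\frac1{\sigma^2}\mathbf{A}^{-1}\mathbf{X}^T(\mathbf{F}\mathbf{F}^T)^{-1}\mathbf{e}=\mathbf{e}$. Then $\sum_k\lambda_k(\mathbf{t})=\mathbf{e}_1^T\mathbf{e}=1$. That $\widehat{Z}(\mathbf{t})=\sum_k\lambda_k(\mathbf{t})y_k$ is immediate from linearity in $\mathbf{y}$ and the definition \eqref{KrigingFunc}.

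The only real obstacle is justifying that $\mathbf{A}$ is nonsingular, which is implicitly assumed by the formula for $\widehat{Z}(\mathbf{t})$. I would argue as follows. Both summands of $\mathbf{A}$ are positive semidefinite, taking $\mathbf{G}(\mathbf{t})\mathbf{G}(\mathbf{t})^T$ positive definite for $\mathbf{t}$ distinct from the $\mathbf{s}_i$, consistent with the earlier theorem showing it is a covariance. So if $\mathbf{w}^T\mathbf{A}\mathbf{w}=0$, then $\mathbf{D}\mathbf{w}=\mathbf{0}$, forcing $\mathbf{w}=c\,\mathbf{e}$ because $\ker\mathbf{D}=\mathrm{span}(\mathbf{e})$. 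It also forces $\mathbf{X}\mathbf{w}=\mathbf{0}$, i.e.\ $c\,\mathbf{e}=\mathbf{0}$ in $\mathbb{R}^n$, so $c=0$. Thus $\mathbf{A}$ is positive definite and the argument closes.
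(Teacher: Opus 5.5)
Your proof is correct and is essentially the paper's argument. The paper also relies on $\mathbf{D}\begin{bmatrix}1\\ \mathbf{e}\end{bmatrix}=\mathbf{0}$ and $\mathbf{X}\begin{bmatrix}1\\ \mathbf{e}\end{bmatrix}=\mathbf{e}$; it just packages them as the identity $\frac1{\sigma^2}\mathbf{W}(\mathbf{t})^{-1}\mathbf{X}^T(\mathbf{F}\mathbf{F}^T)^{-1}\mathbf{X}=\mathbf{I}-\mathbf{W}(\mathbf{t})^{-1}\mathbf{D}^T(\mathbf{G}(\mathbf{t})\mathbf{G}(\mathbf{t})^T)^{-1}\mathbf{D}$ applied to the all-ones vector, whereas you apply $\mathbf{A}$ to that vector directly, and your positive-definiteness check for $\mathbf{A}$ (via $\ker\mathbf{D}$ being spanned by the all-ones vector, which $\mathbf{X}$ does not annihilate) is a welcome detail the paper leaves implicit.
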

\noindent
\emph{Proof:} Introduce  $\mathbf{W}(\mathbf{t})=\frac1{\sigma^2}\mathbf{X}^T(\mathbf{F}\mathbf{F}^T)^{-1}\mathbf{X}+\mathbf{D}^T(\mathbf{G}(\mathbf{t})\mathbf{G}(\mathbf{t})^T)^{-1}\mathbf{D}$ so that we may write compactly, $\lambda_k(\mathbf{t})
=\frac1{\sigma^2}\mathbf{e}_1^T\mathbf{W}(\mathbf{t})^{-1} \mathbf{X}^T(\mathbf{F}\mathbf{F}^T)^{-1}\mathbf{e}_k$.  
Observe that $\mathbf{X}\begin{bmatrix}
    \frac1n\\ \mathbf{e}_k
\end{bmatrix}=\mathbf{e}_k$ and $\mathbf{D}\begin{bmatrix}
    1\\ \mathbf{e}
\end{bmatrix}=\mathbf{0}$.  By adding and subtracting $\mathbf{W}(\mathbf{t})^{-1}\mathbf{D}^T(\mathbf{G}(\mathbf{t})\mathbf{G}(\mathbf{t})^T)^{-1}\mathbf{D}$ we find
\begin{align*}
    \frac1{\sigma^2}\mathbf{W}(\mathbf{t})^{-1}\mathbf{X}^T&(\mathbf{F}\mathbf{F}^T)^{-1}\mathbf{X}=\\
    \mathbf{W}(\mathbf{t})^{-1}&\underbrace{(\frac1{\sigma^2}\mathbf{X}^T(\mathbf{F}\mathbf{F}^T)^{-1}\mathbf{X}+\mathbf{D}^T(\mathbf{G}(\mathbf{t})\mathbf{G}(\mathbf{t})^T)^{-1}\mathbf{D})}_{\mathbf{W}(\mathbf{t})}
    -\mathbf{W}(\mathbf{t})^{-1}\mathbf{D}^T(\mathbf{G}(\mathbf{t})\mathbf{G}(\mathbf{t})^T)^{-1}\mathbf{D} \\
    &\qquad\qquad =\mathbf{I}-\mathbf{W}(\mathbf{t})^{-1}\mathbf{D}^T(\mathbf{G}(\mathbf{t})\mathbf{G}(\mathbf{t})^T)^{-1}\mathbf{D}.
\end{align*}
Hence,
\begin{align*}
\displaystyle \sum_{k=1}^n \lambda_k(\mathbf{t})=&\frac1{\sigma^2}\sum_{k=1}^n\mathbf{e}_1^T\mathbf{W}(\mathbf{t})^{-1} \mathbf{X}^T(\mathbf{F}\mathbf{F}^T)^{-1}\mathbf{e}_k=\frac1{\sigma^2}\sum_{k=1}^n\mathbf{e}_1^T\mathbf{W}(\mathbf{t})^{-1}\mathbf{X}^T(\mathbf{F}\mathbf{F}^T)^{-1}\mathbf{X}\begin{bmatrix}
    \frac1n\\ \mathbf{e}_k
\end{bmatrix} \\  
& =\mathbf{e}_1^T\left(\mathbf{I}-\mathbf{W}(\mathbf{t})^{-1}\mathbf{D}^T(\mathbf{G}(\mathbf{t})\mathbf{G}(\mathbf{t})^T)^{-1}\mathbf{D}\right)\sum_{k=1}^n\begin{bmatrix}
    \frac1n\\ \mathbf{e}_k
\end{bmatrix}\\[2mm]
&\quad =\mathbf{e}_1^T\left(\mathbf{I}-\mathbf{W}(\mathbf{t})^{-1}\mathbf{D}^T(\mathbf{G}(\mathbf{t})\mathbf{G}(\mathbf{t})^T)^{-1}\mathbf{D}\right)
\begin{bmatrix}
    1\\ \mathbf{e}
\end{bmatrix}=\mathbf{e}_1^T\begin{bmatrix}
    1\\ \mathbf{e}
\end{bmatrix}=1
\quad\Box
\end{align*}
\subsection{The functional form of Kriging coefficients}
\label{sec:funcKrig}
We study the structure of the Kriging coefficients, $\{\lambda_k(\mathbf{t})\}_{k=1}^n$, more closely, deriving them in terms of the distribution of our basic  increments: 
\begin{equation}\label{KrigingFuncStar}
\lambda_k(\mathbf{t})=\frac1{\sigma^2}\mathbf{e}_1^T\left(\frac1{\sigma^2}\mathbf{X}^T(\mathbf{F}\mathbf{F}^T)^{-1}\mathbf{X}+\mathbf{D}^T(\mathbf{G}(\mathbf{t})\mathbf{G}(\mathbf{t})^T)^{-1}\mathbf{D}\right)^{-1}\mathbf{X}^T(\mathbf{F}\mathbf{F}^T)^{-1}\mathbf{e}_k.
\end{equation}
Partition as before $\mathbf{D}=[-\mathbf{e}\,|\,\mathbf{I}]$ and define  the posterior precision: 
\begin{equation} \label{postPrec}
    \mathbf{W}(\mathbf{t})=\frac1{\sigma^2}\mathbf{X}^T(\mathbf{F}\mathbf{F}^T)^{-1}\mathbf{X}+\mathbf{D}^T(\mathbf{G}(\mathbf{t})\mathbf{G}(\mathbf{t})^T)^{-1}\mathbf{D} =\begin{bmatrix}
    a(\mathbf{t}) & \mathbf{b}(\mathbf{t})^{\!\top} \\
    \mathbf{b}(\mathbf{t}) & \mathbf{C}(\sigma,\mathbf{t})
\end{bmatrix}
\end{equation}
where $a(\mathbf{t})=\mathbf{e}^\top(\mathbf{G}(\mathbf{t})\mathbf{G}(\mathbf{t})^T)^{-1}\mathbf{e}$, $\mathbf{b}(\mathbf{t})=-(\mathbf{G}(\mathbf{t})\mathbf{G}(\mathbf{t})^T)^{-1}\mathbf{e}$, and $\mathbf{C}(\sigma,\mathbf{t})=\frac1{\sigma^2}(\mathbf{F}\mathbf{F}^\top)^{-1}+(\mathbf{G}(\mathbf{t})\mathbf{G}(\mathbf{t})^T)^{-1}$.
With these substitutions, the IGP Kriging coefficient, $\lambda_k(\mathbf{t})$, can be expressed as 
\begin{align*}
\lambda_k(\mathbf{t})=&\frac1{\sigma^2}\mathbf{e}_1^T\begin{bmatrix}
a(\mathbf{t}) & \mathbf{b}(\mathbf{t})^{\!\top} \\
    \mathbf{b}(\mathbf{t}) & \mathbf{C}(\sigma,\mathbf{t})
\end{bmatrix}^{-1}\begin{pmatrix}
   \mathbf{0} \\ (\mathbf{F}\mathbf{F}^T)^{-1}\mathbf{e}_k 
\end{pmatrix}=
\frac1{\sigma^2} \frac{-\mathbf{b}(\mathbf{t})^{\!\top}\mathbf{C}(\sigma,\mathbf{t})^{-1}(\mathbf{F}\mathbf{F}^T)^{-1}\mathbf{e}_k}{a(\mathbf{t})-\mathbf{b}(\mathbf{t})^{\!\top}\mathbf{C}(\sigma,\mathbf{t})^{-1}\mathbf{b}(\mathbf{t})}\\[2mm]
&=\frac{\mathbf{e}^\top(\mathbf{G}\mathbf{G}^T)^{-1}\left(\frac1{\sigma^2}(\mathbf{F}\mathbf{F}^\top)^{-1}+(\mathbf{G}\mathbf{G}^T)^{-1}\right)^{-1}\frac1{\sigma^2}(\mathbf{F}\mathbf{F}^T)^{-1}\mathbf{e}_k}
{\mathbf{e}^\top\left((\mathbf{G}\mathbf{G}^T)^{-1}-(\mathbf{G}\mathbf{G}^T)^{-1}\left(\frac1{\sigma^2}(\mathbf{F}\mathbf{F}^\top)^{-1}+(\mathbf{G}\mathbf{G}^T)^{-1}\right)^{-1}(\mathbf{G}\mathbf{G}^T)^{-1}
\right)\mathbf{e}}
\end{align*}
which finally gives
\begin{equation}
    \label{KrigCoeff:RatForm}
\lambda_k(\mathbf{t})=\frac{\mathbf{e}^\top\left(\sigma^2\,\mathbf{F}\mathbf{F}^\top+\mathbf{G}(\mathbf{t})\mathbf{G}(\mathbf{t})^\top\right)^{-1}\mathbf{e}_k}
{\mathbf{e}^\top\left(\sigma^2\,\mathbf{F}\mathbf{F}^\top+\mathbf{G}(\mathbf{t})\mathbf{G}(\mathbf{t})^\top\right)^{-1}\mathbf{e}}
\end{equation}
This is similar in form to Joseph's rational Kriging predictor and in the same sense that Joseph uses the term, we might expect the IGP Kriging coefficients to have a rational form with respect to the core location parameters $\{\boldsymbol{\gamma}(\mathbf{t},\mathbf{s}_1),\boldsymbol{\gamma}(\mathbf{t},\mathbf{s}_2),\ldots,\boldsymbol{\gamma}(\mathbf{t},\mathbf{s}_n)\}$.
Indeed, one may note that the entries of $\mathbf{G}(\mathbf{t})\mathbf{G}(\mathbf{t})^\top=\boldsymbol{\gamma}(\mathbf{t})\,\mathbf{e}^{\!\top}+\mathbf{e}\,\boldsymbol{\gamma}(\mathbf{t})^{\!\top}-\boldsymbol{\Gamma}$ are \emph{linear} expressions with  respect to the location parameters  $\{\boldsymbol{\gamma}(\mathbf{t},\mathbf{s}_1),\boldsymbol{\gamma}(\mathbf{t},\mathbf{s}_2),\ldots,\boldsymbol{\gamma}(\mathbf{t},\mathbf{s}_n)\}$. Hence the construction of  numerators and denominators in \eqref{KrigCoeff:RatForm} should be expected to each yield ratios of \emph{polynomials} in the location parameters, and so the Kriging coefficients themselves may be expected to be \emph{rational functions} of  the core location parameters, $\{\boldsymbol{\gamma}(\mathbf{t},\mathbf{s}_1),\boldsymbol{\gamma}(\mathbf{t},\mathbf{s}_2),\ldots,\boldsymbol{\gamma}(\mathbf{t},\mathbf{s}_n)\}$.
\subsubsection{Limit-Kriging vs IGP Kriging}
As an illustration, we are able to recover Joseph's limit-Kriging approach as a special case of \eqref{KrigCoeff:RatForm}. From \eqref{limKrigCoeff}, recall that limit-Kriging within the variogram framework produces coefficients defined as 
$
\displaystyle
\lambda_k^{\mathsf{lim}}(\mathbf{t})
=\frac{(\vartheta^{2}\mathbf{e}-\boldsymbol{\gamma}(\mathbf{t}))^\top(\vartheta^{2}\mathbf{e}\mathbf{e}^\top-\boldsymbol{\Gamma})^{-1}\mathbf{e}_k}{(\vartheta^{2}\mathbf{e}-\boldsymbol{\gamma}(\mathbf{t}))^\top(\vartheta^{2}\mathbf{e}\mathbf{e}^\top-\boldsymbol{\Gamma})^{-1}\mathbf{e}}.
$
This will correspond to \eqref{KrigCoeff:RatForm}, if we can find a nonsingular matrix, $\mathbf{G}_{\mathsf{lim}}(\mathbf{t})$ such that 
$$
\left(\sigma^2\mathbf{F}\mathbf{F}^\top+\mathbf{G}_{\mathsf{lim}}(\mathbf{t})\mathbf{G}_{\mathsf{lim}}(\mathbf{t})^{\top}\right) (\vartheta^{2}\mathbf{e}\mathbf{e}^\top-\boldsymbol{\Gamma})^{-1}(\vartheta^{2}\mathbf{e}-\boldsymbol{\gamma}(\mathbf{t}))=\mathbf{e}.
$$
For simplicity consider the noise-free case, where $\sigma\rightarrow 0$ and define scalar functions of $\mathbf{t}$ as 
$$
\begin{array}{c}
f(\mathbf{t})=\mathbf{e}^\top(\vartheta^{2}\mathbf{e}\mathbf{e}^\top-\boldsymbol{\Gamma})^{-1}(\vartheta^{2}\mathbf{e}-\boldsymbol{\gamma}(\mathbf{t})),\\[2mm]
g(\mathbf{t})=(\vartheta^{2}\mathbf{e}-\boldsymbol{\gamma}(\mathbf{t}))^\top(\vartheta^{2}\mathbf{e}\mathbf{e}^\top-\boldsymbol{\Gamma})^{-1}(\vartheta^{2}\mathbf{e}-\boldsymbol{\gamma}(\mathbf{t})),\ \mbox{and}\\[2mm] \displaystyle h(\mathbf{t})=\frac{f(\mathbf{t})+g(\mathbf{t})}{f(\mathbf{t})^2}.
\end{array}
$$
Then define 
{\small
\begin{equation}
    \label{DFP}
\mathbf{G}_{\mathsf{lim}}(\mathbf{t})\mathbf{G}_{\mathsf{lim}}(\mathbf{t})^{\top}=\vartheta^{2}\mathbf{e}\mathbf{e}^\top-\boldsymbol{\Gamma}\ +\ h(\mathbf{t})\,\mathbf{e}\mathbf{e}^\top-\frac1{f(\mathbf{t})}(\vartheta^{2}\mathbf{e}-\boldsymbol{\gamma}(\mathbf{t}))\mathbf{e}^\top-\frac1{f(\mathbf{t})}\mathbf{e}(\vartheta^{2}\mathbf{e}-\boldsymbol{\gamma}(\mathbf{t}))^\top.
\end{equation}
}
With this definition, we can verify directly that 
$$
\mathbf{G}_{\mathsf{lim}}(\mathbf{t})\mathbf{G}_{\mathsf{lim}}(\mathbf{t})^{\top} (\vartheta^{2}\mathbf{e}\mathbf{e}^\top-\boldsymbol{\Gamma})^{-1}(\vartheta^{2}\mathbf{e}-\boldsymbol{\gamma}(\mathbf{t}))=\mathbf{e}.
$$
and so, 
$$
\mathbf{e}^T\left(\mathbf{G}_{\mathsf{lim}}(\mathbf{t})\mathbf{G}_{\mathsf{lim}}(\mathbf{t})^{\top} \right)^{-1}=(\vartheta^{2}\mathbf{e}-\boldsymbol{\gamma}(\mathbf{t}))^\top(\vartheta^{2}\mathbf{e}\mathbf{e}^\top-\boldsymbol{\Gamma})^{-1}.
$$
With this definition for $\mathbf{G}_{\mathsf{lim}}$, the Kriging coefficients in \eqref{KrigCoeff:RatForm} 
will match the limit-Kriging coefficients given in  \eqref{limKrigCoeff}.  The expression given in \eqref{DFP} for $\mathbf{G}_{\mathsf{lim}}\mathbf{G}_{\mathsf{lim}}^\top$ matches the \textsf{DFP} quasi-Newton secant update and is one of the Broyden-class of update formulas (e.g., see \cite{nocedal2006numerical} \S 6.3). A key property of the \textsf{DFP} update is that so long as $f(\mathbf{t})>0$, the right-hand side of \eqref{DFP} is positive-definite. Conversely, if $f(\mathbf{t})\leq 0$, there will be \emph{no} choice of $\mathbf{G}(\mathbf{t})$ in 
\eqref{KrigCoeff:RatForm} that could yield $\lambda_k^{\mathsf{lim}}(\mathbf{t})$.
Taking into account how the covariance of increments relates to the variogram, we have an alternative expression derived earlier:
\begin{align*}
    \mathbf{G}(\mathbf{t})\mathbf{G}(\mathbf{t})^T&=-\mathbf{D}\begin{bmatrix}
    0 & \boldsymbol{\gamma}(\mathbf{t})^{\!\top} \\
    \boldsymbol{\gamma}(\mathbf{t}) & \boldsymbol{\Gamma}
\end{bmatrix}\mathbf{D}^\top=\boldsymbol{\gamma}(\mathbf{t})\,\mathbf{e}^{\!\top}+\mathbf{e}\,\boldsymbol{\gamma}(\mathbf{t})^{\!\top}-\boldsymbol{\Gamma}\\
&= \vartheta^{2}\mathbf{e}\mathbf{e}^\top -\boldsymbol{\Gamma}  + \vartheta^{2}\mathbf{e}\mathbf{e}^\top-(\vartheta^{2}\mathbf{e}-\boldsymbol{\gamma}(\mathbf{t}))\,\mathbf{e}^{\!\top}-\mathbf{e}\,(\vartheta^{2}\mathbf{e}-\boldsymbol{\gamma}(\mathbf{t}))^\top 
\end{align*}  
Note that both $\mathbf{G}(\mathbf{t})\mathbf{G}(\mathbf{t})^T$ and $\mathbf{G}_{\mathsf{lim}}(\mathbf{t})\mathbf{G}_{\mathsf{lim}}(\mathbf{t})^{\top}$ are linear combinations of four $n\times n$ matrices:$\{\vartheta^{2}\mathbf{e}\mathbf{e}^\top -\boldsymbol{\Gamma},   \mathbf{e}\mathbf{e}^\top, (\vartheta^{2}\mathbf{e}-\boldsymbol{\gamma}(\mathbf{t}))\,\mathbf{e}^{\!\top},\mathbf{e}\,(\vartheta^{2}\mathbf{e}-\boldsymbol{\gamma}(\mathbf{t}))^\top\}$, which constitutes a linearly independent set so long as $\boldsymbol{\gamma}(\mathbf{t})$ is \emph{not} a multiple of $\mathbf{e}$. Thus,  
$\mathbf{G}(\mathbf{t})\mathbf{G}(\mathbf{t})^T=\mathbf{G}_{\mathsf{lim}}(\mathbf{t})\mathbf{G}_{\mathsf{lim}}(\mathbf{t})^{\top}$ if and only if their coefficients match: $f(\mathbf{t})=1$ and $h(\mathbf{t})=\vartheta^{2}$, which evidently is an untenable hypothesis. This leads us to conclude that $\mathbf{G}(\mathbf{t})\mathbf{G}(\mathbf{t})^T$ and $\mathbf{G}_{\mathsf{lim}}(\mathbf{t})\mathbf{G}_{\mathsf{lim}}(\mathbf{t})^{\top}$ are manifestly different matrices describing, therefore, manifestly different prior distributions of $\mathbf{D}\,\mathbb{Z}$. 
From this, we conclude that regardless of how well limit-Kriging may perform as a predictor of $Z(\mathbf{t})$, it must conform with a distribution that is inconsistent with the original prior distribution determined by the variogram.  

\subsubsection{The linear form of IGP Kriging coefficients}
We go on to consider the form of \eqref{KrigCoeff:RatForm} in more detail. Note that $\sigma^2\,\mathbf{F}\mathbf{F}^\top+\mathbf{G}(\mathbf{t})\mathbf{G}(\mathbf{t})^T$ is a rank-two change from $\sigma^2\,\mathbf{F}\mathbf{F}^\top-\boldsymbol{\Gamma}$, so by the Sherman-Morrison-Woodbury formula, we can write $\left(\sigma^2\,\mathbf{F}\mathbf{F}^\top+\mathbf{G}(\mathbf{t})\mathbf{G}(\mathbf{t})^T\right)^{-1}$ as a rank-two change to $\left(\sigma^2\,\mathbf{F}\mathbf{F}^\top-\boldsymbol{\Gamma}\right)^{-1}$. It may happen that $\sigma^2\,\mathbf{F}\mathbf{F}^\top-\boldsymbol{\Gamma}$ is singular (recall that $\boldsymbol{\Gamma}$ is  indefinite), however we may introduce $\delta\,\mathbf{e}\mathbf{e}^\top$ and write:
{\small
$$
\sigma^2\,\mathbf{F}\mathbf{F}^\top+\mathbf{G}(\mathbf{t})\mathbf{G}(\mathbf{t})^T=-\left(\frac{\delta}2\mathbf{e}-\boldsymbol{\gamma}\right)\,\mathbf{e}^{\!\top}-\mathbf{e}\,\left(\frac{\delta}2\mathbf{e}-\boldsymbol{\gamma}\right)^{\!\top}+\sigma^2\,\mathbf{F}\mathbf{F}^\top+\delta\,\mathbf{e}\mathbf{e}^\top-\boldsymbol{\Gamma}.
$$
}
We have established in 
Theorem \ref{ThmCondNegDef}, that $\delta>0$ may be chosen large enough to guarantee that $\delta\,\mathbf{e}\mathbf{e}^\top-\boldsymbol{\Gamma}$ is positive definite (and so, also $\sigma^2\,\mathbf{F}\mathbf{F}^\top+\delta\,\mathbf{e}\mathbf{e}^\top-\boldsymbol{\Gamma}$). 
For compactness in the expressions that follow, we will suppress the $\mathbf{t}$ dependence in $\boldsymbol{\gamma}$, assume that $\sigma^2\,\mathbf{F}\mathbf{F}^\top-\boldsymbol{\Gamma}$ is positive definite, and represent $\left(\sigma^2\,\mathbf{F}\mathbf{F}^\top-\boldsymbol{\Gamma}\right)^{-1}=\mathbf{N}$ :
{\small 
\begin{align*}
(\sigma^2\,\mathbf{F}\mathbf{F}^\top+\mathbf{G}(\mathbf{t})\mathbf{G}(\mathbf{t})^T)^{-1}&=\left(\sigma^2\,\mathbf{F}\mathbf{F}^\top-\boldsymbol{\Gamma}+\boldsymbol{\gamma}\,\mathbf{e}^{\!\top}+\mathbf{e}\,\boldsymbol{\gamma}^{\!\top}\right)^{-1}\\
=\mathbf{N}
-\mathbf{N}&[\boldsymbol{\gamma},\,\mathbf{e}]\begin{bmatrix}
    \boldsymbol{\gamma}^\top\mathbf{N}\boldsymbol{\gamma} & 1+ \boldsymbol{\gamma}^\top\mathbf{N}\mathbf{e} \\
1+\mathbf{e}^\top\mathbf{N}\boldsymbol{\gamma} & \mathbf{e}^\top\mathbf{N}\mathbf{e}
\end{bmatrix}^{-1}\begin{bmatrix}
   \boldsymbol{\gamma}^\top\\
    \mathbf{e}^\top
\end{bmatrix}\mathbf{N}
\end{align*} }
This allows us to  compute directly (after some effort)
\begin{align*}
\mathbf{e}^\top(\sigma^2\,\mathbf{F}\mathbf{F}^\top&+\mathbf{G}(\mathbf{t})\mathbf{G}(\mathbf{t})^T)^{-1}\mathbf{e}=
\\
&\mathbf{e}^\top\mathbf{N}\mathbf{e}+\frac{[\mathbf{e}^\top\mathbf{N}\boldsymbol{\gamma}, \mathbf{e}^\top\mathbf{N}\mathbf{e}] 
\begin{bmatrix}
-\mathbf{e}^\top\mathbf{N}\mathbf{e}
     & 1+ \boldsymbol{\gamma}^\top\mathbf{N}\mathbf{e} \\
   1+\mathbf{e}^\top\mathbf{N}\boldsymbol{\gamma} & -\boldsymbol{\gamma}^\top\mathbf{N}\boldsymbol{\gamma}
\end{bmatrix}
\begin{pmatrix}
\boldsymbol{\gamma}^\top\mathbf{N}\mathbf{e}\\ \mathbf{e}^\top\mathbf{N}\mathbf{e}\end{pmatrix}}
{\boldsymbol{\gamma}^\top\mathbf{N}\boldsymbol{\gamma}\,\mathbf{e}^\top\mathbf{N}\mathbf{e}-(\boldsymbol{\gamma}^\top\mathbf{N}\mathbf{e}\,+\,1)^{2}}\\[2mm] 
&\qquad = \frac{-\mathbf{e}^\top\mathbf{N}\mathbf{e}}{\boldsymbol{\gamma}^\top\mathbf{N}\boldsymbol{\gamma}\,\mathbf{e}^\top\mathbf{N}\mathbf{e}-(\boldsymbol{\gamma}^\top\mathbf{N}\mathbf{e}\,+\,1)^{2}}
\end{align*}
and 
\begin{align*}
\mathbf{e}^\top(\sigma^2\,\mathbf{F}\mathbf{F}^\top&+\mathbf{G}(\mathbf{t})\mathbf{G}(\mathbf{t})^T)^{-1}\mathbf{e}_k=
\\
&\mathbf{e}^\top\mathbf{N}\mathbf{e}_k+\frac{[\mathbf{e}^\top\mathbf{N}\boldsymbol{\gamma}, \mathbf{e}^\top\mathbf{N}\mathbf{e}] 
\begin{bmatrix}
-\mathbf{e}^\top\mathbf{N}\mathbf{e}
     & 1+ \boldsymbol{\gamma}^\top\mathbf{N}\mathbf{e} \\
   1+\mathbf{e}^\top\mathbf{N}\boldsymbol{\gamma} & -\boldsymbol{\gamma}^\top\mathbf{N}\boldsymbol{\gamma}
\end{bmatrix}
\begin{pmatrix}
\boldsymbol{\gamma}^\top\mathbf{N}\mathbf{e}_k\\ \mathbf{e}^\top\mathbf{N}\mathbf{e}_k\end{pmatrix}}
{\boldsymbol{\gamma}^\top\mathbf{N}\boldsymbol{\gamma}\,\mathbf{e}^\top\mathbf{N}\mathbf{e}-(\boldsymbol{\gamma}^\top\mathbf{N}\mathbf{e}\,+\,1)^{2}}\\[2mm] 
&\qquad = \frac{\mathbf{e}^\top\mathbf{N}\mathbf{e}\boldsymbol{\gamma}^\top\mathbf{N}\mathbf{e}_k-(1+\boldsymbol{\gamma}^\top\mathbf{N}\mathbf{e})\mathbf{e}^\top\mathbf{N}\mathbf{e}_k}{\boldsymbol{\gamma}^\top\mathbf{N}\boldsymbol{\gamma}\,\mathbf{e}^\top\mathbf{N}\mathbf{e}-(\boldsymbol{\gamma}^\top\mathbf{N}\mathbf{e}\,+\,1)^{2}}.
\end{align*}
Taking ratios in \eqref{KrigCoeff:RatForm}, we find an explicit expression for the Kriging coefficients:
\begin{equation}\label{KrigCoeffLinearExpr}
\lambda_k(\mathbf{t})=(1+\boldsymbol{\gamma}(\mathbf{t})^\top\mathbf{N}\mathbf{e})\frac{\mathbf{e}^\top\mathbf{N}\mathbf{e}_k}{\mathbf{e}^\top\mathbf{N}\mathbf{e}}-\boldsymbol{\gamma}(\mathbf{t})^\top\mathbf{N}\mathbf{e}_k. 
\end{equation}
Two comments are worth making about \eqref{KrigCoeffLinearExpr}: First, despite the formal \emph{rational} form anticipated in \eqref{KrigCoeff:RatForm}, the expression in \eqref{KrigCoeffLinearExpr} is \emph{linear} with respect to the location parameters $\{\boldsymbol{\gamma}(\mathbf{t},\mathbf{s}_1),\boldsymbol{\gamma}(\mathbf{t},\mathbf{s}_2),\ldots,\boldsymbol{\gamma}(\mathbf{t},\mathbf{s}_n)\}$; and secondly, this expression was derived with a simplifying assumption that amounted to taking $\delta=0$, but in fact, there will be some computational benefits in taking $\delta>0$, as we discuss below. Nonetheless, the final expression in \eqref{KrigCoeffLinearExpr} for the Kriging coefficients is \emph{independent} of $\delta$. 

We define the ``noise-free Kriging coefficients" as the limiting values of the coefficients, $\lambda_k(\mathbf{t})$, 
as  observation noise vanishes, $\sigma\rightarrow 0$:
$\displaystyle \lambda_k^0(\mathbf{t})=\lim_{\sigma\rightarrow 0}\lambda_k(\mathbf{t})$.
Notice that as 
 $\sigma\rightarrow 0$, $\mathbf{N}=\left(\sigma^2\,\mathbf{F}\mathbf{F}^\top-\boldsymbol{\Gamma}\right)^{-1}\rightarrow -\boldsymbol{\Gamma}^{-1}$, so from \eqref{KrigCoeffLinearExpr}, 
\begin{equation} \label{noiseFreeKrigCoef}
\lambda_k^0(\mathbf{t}) =
\boldsymbol{\gamma}(\mathbf{t})^\top\boldsymbol{\Gamma}^{-1}\mathbf{e}_k+
(1-\mathbf{e}^\top\boldsymbol{\Gamma}^{-1}\boldsymbol{\gamma}(\mathbf{t}))\frac{\mathbf{e}^\top\boldsymbol{\Gamma}^{-1}\mathbf{e}_k}{\mathbf{e}^\top\boldsymbol{\Gamma}^{-1}\mathbf{e}}.
\end{equation}
This implies that noise-free Kriging is interpolation: 
\begin{theorem}\label{noiseFreeKrigIntrp}
    In the limiting case of $\sigma\rightarrow 0$, $\lambda_k^0(\mathbf{s}_{\ell})=\left\{\begin{array}{ll}
    1 &\mbox{ for }\ell=k\\
    0 & \mbox{ otherwise}
    \end{array}\right. .$\\ Hence $\displaystyle \widehat{Z}(\mathbf{t})=\sum_{k=1}^n \lambda_k^0(\mathbf{t})\,y_k$  for exact observations, and  $\widehat{Z}(\mathbf{s}_{\ell})=y_{\ell},$ for $\ell=1,2,\ldots,n$
\end{theorem}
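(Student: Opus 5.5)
The plan is to evaluate the closed form \eqref{noiseFreeKrigCoef} directly at $\mathbf{t}=\mathbf{s}_\ell$. The key observation is that when the target coincides with an observation location, the vector $\boldsymbol{\gamma}(\mathbf{s}_\ell)$ is exactly the $\ell$-th column of $\boldsymbol{\Gamma}$. Indeed, its $i$-th entry is $\gamma(\mathbf{s}_i,\mathbf{s}_\ell)=\Gamma_{i\ell}$, including the diagonal entry $\gamma(\mathbf{s}_\ell,\mathbf{s}_\ell)=\frac12\mathbb{E}[0]=0=\Gamma_{\ell\ell}$. Thus $\boldsymbol{\gamma}(\mathbf{s}_\ell)=\boldsymbol{\Gamma}\mathbf{e}_\ell$, and by symmetry of $\boldsymbol{\Gamma}$, $\boldsymbol{\gamma}(\mathbf{s}_\ell)^\top\boldsymbol{\Gamma}^{-1}=\mathbf{e}_\ell^\top$.

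Substituting this into \eqref{noiseFreeKrigCoef} gives two simplifications:
\[
\boldsymbol{\gamma}(\mathbf{s}_\ell)^\top\boldsymbol{\Gamma}^{-1}\mathbf{e}_k=\mathbf{e}_\ell^\top\mathbf{e}_k=\delta_{\ell k},
\qquad
1-\mathbf{e}^\top\boldsymbol{\Gamma}^{-1}\boldsymbol{\gamma}(\mathbf{s}_\ell)=1-\mathbf{e}^\top\mathbf{e}_\ell=0.
\]
The second term of \eqref{noiseFreeKrigCoef} therefore drops out, leaving $\lambda_k^0(\mathbf{s}_\ell)=\delta_{\ell k}$. For this to be legitimate, the quotient $\mathbf{e}^\top\boldsymbol{\Gamma}^{-1}\mathbf{e}_k/\mathbf{e}^\top\boldsymbol{\Gamma}^{-1}\mathbf{e}$ must be finite. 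That holds because $\boldsymbol{\Gamma}$ is assumed nonsingular, as in Theorem \ref{ThmCondNegDef}, and the proof of part (c) of that theorem established $\mathbf{e}^\top\boldsymbol{\Gamma}^{-1}\mathbf{e}>0$. I will cite that fact explicitly.

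The interpolation statement then follows at once: $\widehat{Z}(\mathbf{s}_\ell)=\sum_k\lambda_k^0(\mathbf{s}_\ell)y_k=y_\ell$.

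The one step needing care is that \eqref{noiseFreeKrigCoef} was derived for $\mathbf{t}$ distinct from the $\mathbf{s}_k$. At $\mathbf{t}=\mathbf{s}_\ell$ the increment covariance $\mathbf{G}\mathbf{G}^\top$ is singular, since $Z(\mathbf{s}_\ell)-Z(\mathbf{t})=0$. I will handle this as follows.
\begin{enumerate}
\item[(i)] Define $\lambda_k^0(\mathbf{s}_\ell)$ as the value of the expression \eqref{noiseFreeKrigCoef}. That expression is well defined whenever $\boldsymbol{\Gamma}$ is invertible.
\item[(ii)] Note that \eqref{KrigCoeffLinearExpr} is linear in $\boldsymbol{\gamma}(\mathbf{t})$, hence continuous in $\mathbf{t}$ whenever $\gamma$ is continuous (mean-square continuity). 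The value at $\mathbf{s}_\ell$ is then the limit as $\mathbf{t}\to\mathbf{s}_\ell$, which matches the intended noise-free predictor.
\end{enumerate}
The order of the limits $\sigma\to0$ and $\mathbf{t}\to\mathbf{s}_\ell$ causes no difficulty, because \eqref{KrigCoeffLinearExpr} depends jointly continuously on $\sigma$ and $\boldsymbol{\gamma}$ through $\mathbf{N}$, as long as $\sigma^2\mathbf{F}\mathbf{F}^\top-\boldsymbol{\Gamma}$ remains invertible near $\sigma=0$.
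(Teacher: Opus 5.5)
Your proposal is correct and is essentially the paper's proof: both substitute $\boldsymbol{\gamma}(\mathbf{s}_\ell)=\boldsymbol{\Gamma}\mathbf{e}_\ell$ (obtained as the limit $\mathbf{t}\rightarrow\mathbf{s}_\ell$) into \eqref{noiseFreeKrigCoef}, so the first term becomes $\mathbf{e}_\ell^\top\mathbf{e}_k$ and the factor $1-\mathbf{e}^\top\mathbf{e}_\ell=0$ annihilates the second. Your extra care about continuity in $\mathbf{t}$ and about $\mathbf{e}^\top\boldsymbol{\Gamma}^{-1}\mathbf{e}\neq 0$ makes explicit what the paper leaves tacit, but that positivity really comes from the model's standing assumption that $\mathbf{G}(\mathbf{t})\mathbf{G}(\mathbf{t})^\top$ is invertible (which forces $\boldsymbol{\Gamma}$ to be strictly negative definite on $\mathbf{e}^\perp$), not from nonsingularity of $\boldsymbol{\Gamma}$ alone: for instance $\gamma(s,t)=(s-t)^2$ at $s=0,1,2$ gives a nonsingular $\boldsymbol{\Gamma}$ with $\mathbf{e}^\top\boldsymbol{\Gamma}^{-1}\mathbf{e}=0$.
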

\begin{proof}\!:
    Note that as $\mathbf{t}\rightarrow\mathbf{s}_{\ell}$, $\boldsymbol{\gamma}(\mathbf{t})\rightarrow \boldsymbol{\Gamma}\mathbf{e}_{\ell}$.  Then from \eqref{noiseFreeKrigCoef}, 
\begin{align*} \lambda_k^0(\mathbf{s}_{\ell})&=\lim_{\mathbf{t}\rightarrow\mathbf{s}_{\ell}}\lambda_k^0(\mathbf{t})=
(\boldsymbol{\Gamma}\mathbf{e}_{\ell})^\top\boldsymbol{\Gamma}^{-1}\mathbf{e}_k+
(1-\mathbf{e}^\top\boldsymbol{\Gamma}^{-1}(\boldsymbol{\Gamma}\mathbf{e}_{\ell}))\frac{\mathbf{e}^\top\boldsymbol{\Gamma}^{-1}\mathbf{e}_k}{\mathbf{e}^\top\boldsymbol{\Gamma}^{-1}\mathbf{e}}\\
&=\mathbf{e}_{\ell}^\top\mathbf{e}_k+
(1-\mathbf{e}^\top\mathbf{e}_{\ell}))\frac{\mathbf{e}^\top\boldsymbol{\Gamma}^{-1}\mathbf{e}_k}{\mathbf{e}^\top\boldsymbol{\Gamma}^{-1}\mathbf{e}}=\mathbf{e}_{\ell}^\top\mathbf{e}_k
\end{align*}
from which the remaining assertions follow. 
\end{proof}
\subsection{Flexibility in choosing increments}
\label{sec:flexIncr}
We have implemented the process model above using increments taken relative to the target quantity $Z(\mathbf{t})$: 
$$
\mathbf{D}\,\begin{pmatrix}
        Z(\mathbf{t})\\ Z(\mathbf{s}_1)\\Z(\mathbf{s}_2)\\ \vdots \\Z(\mathbf{s}_n)
\end{pmatrix}=\begin{pmatrix}
        Z(\mathbf{s}_1)-Z(\mathbf{t})\\
        Z(\mathbf{s}_2)-Z(\mathbf{t})\\ \vdots \\
        Z(\mathbf{s}_n)-Z(\mathbf{t})
 \end{pmatrix}
 \quad\mbox{with}\quad \mathbf{D}=\left[\begin{array}{ccccc} -1&1 & 0 &\cdots& 0\\ -1&0 &1 &  & 0 \\
\vdots & & & \ddots & \\ -1&0 & 0 &\cdots& 1\end{array}\right]
$$
However, we might prefer to  implement a different process model, say,  $\widehat{\mathbf{D}}\,\mathbb{Z}=\widehat{\mathbf{G}}(\mathbf{t})\mathbf{u}$ with $\mathbf{u}\sim N(\mathbf{0},\mathbf{I})$.  Will this change our Kriging coefficients and hence, our predictors ? 

\begin{theorem}
    Suppose $\widehat{\mathbf{D}}\in\mathbb{R}^{n\times(n+1)}$ satisfies  $\mathsf{rank}(\widehat{\mathbf{D}})=n$ and $\widehat{\mathbf{D}}\mathbf{e}=\mathbf{0}$. Let $\widehat{\mathbf{G}}(\mathbf{t})\in\mathbb{R}^{n\times n}$ be an invertible matrix for each $\mathbf{t}\in\Omega$. Partition $\widehat{\mathbf{D}}$ as $\widehat{\mathbf{D}}=[\hat{\mathbf{d}}\ \mathbf{J}]$.  Then  $\mathbf{J}$ is invertible,  and the process model determined by $\widehat{\mathbf{D}}\,\mathbb{Z}=\widehat{\mathbf{G}}(\mathbf{t})\mathbf{u}$ with $\mathbf{u}\sim N(\mathbf{0},\mathbf{I})$  is equivalent to the process model specified by \eqref{processModel} 
    if and only if
   $\widehat{\mathbf{G}}(\mathbf{t})\widehat{\mathbf{G}}(\mathbf{t})^\top=\mathbf{J}\left(\boldsymbol{\gamma}(\mathbf{t})\,\mathbf{e}^{\!\top}+\mathbf{e}\,\boldsymbol{\gamma}(\mathbf{t})^{\!\top}-\boldsymbol{\Gamma}\right)\mathbf{J}^\top$, where we refer to quantities defined in \eqref{varDef}. Under these conditions, the prior precision of $\mathbb{Z}$ induced by this process model remains unchanged from that of \eqref{dataModel}-\eqref{processModel}. In particular, $\widehat{\mathbf{D}}^\top\left(\widehat{\mathbf{G}}(\mathbf{t})\widehat{\mathbf{G}}(\mathbf{t})^\top\right)^{-1}\widehat{\mathbf{D}}=\mathbf{D}^\top\left(\mathbf{G}(\mathbf{t})\mathbf{G}(\mathbf{t})^\top\right)^{-1}\mathbf{D}$ and the Kriging coefficients are as in  \eqref{KrigingFunc}. 
\end{theorem}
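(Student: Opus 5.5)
The plan is to relate $\widehat{\mathbf{D}}$ to $\mathbf{D}=[-\mathbf{e}\ \mathbf{I}]$ through an invertible change of increments. Here the condition $\widehat{\mathbf{D}}\mathbf{e}=\mathbf{0}$ is read with $\mathbf{e}\in\mathbb{R}^{n+1}$, i.e. $\widehat{\mathbf{D}}\begin{pmatrix}1\\ \mathbf{e}\end{pmatrix}=\mathbf{0}$. This condition gives $\hat{\mathbf{d}}+\mathbf{J}\mathbf{e}=\mathbf{0}$, so $\hat{\mathbf{d}}=-\mathbf{J}\mathbf{e}$. Hence $\widehat{\mathbf{D}}=[-\mathbf{J}\mathbf{e}\ \ \mathbf{J}]=\mathbf{J}[-\mathbf{e}\ \ \mathbf{I}]=\mathbf{J}\mathbf{D}$. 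Since $\mathsf{rank}(\widehat{\mathbf{D}})=n$ and $\mathsf{rank}(\mathbf{J}\mathbf{D})\le\mathsf{rank}(\mathbf{J})$, the $n\times n$ matrix $\mathbf{J}$ must have rank $n$ and so is invertible. This settles the first assertion. It also gives the structural fact on which everything else rests: the rows of $\widehat{\mathbf{D}}$ span the same space as the rows of $\mathbf{D}$, namely the orthogonal complement of $(1,\mathbf{e}^\top)^\top$, i.e. all generalized increments.

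For the equivalence, I will interpret ``equivalent process model'' as meaning the two specifications induce the same Gaussian law on the increment space. Equivalently, $\widehat{\mathbf{D}}\mathbb{Z}$ must have the same distribution under both models. Under \eqref{processModel}, $\mathbf{D}\mathbb{Z}$ has covariance $\mathbf{M}=\boldsymbol{\gamma}(\mathbf{t})\mathbf{e}^\top+\mathbf{e}\boldsymbol{\gamma}(\mathbf{t})^\top-\boldsymbol{\Gamma}$, as computed after the first theorem. Therefore $\widehat{\mathbf{D}}\mathbb{Z}=\mathbf{J}\mathbf{D}\mathbb{Z}$ has covariance $\mathbf{J}\mathbf{M}\mathbf{J}^\top$. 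Both models are zero-mean Gaussian on these increments, so they agree if and only if $\widehat{\mathbf{G}}\widehat{\mathbf{G}}^\top=\mathbf{J}\mathbf{M}\mathbf{J}^\top$. For the converse direction, note that $\mathbf{J}$ is invertible: a law on $\widehat{\mathbf{D}}\mathbb{Z}$ determines the law on $\mathbf{D}\mathbb{Z}=\mathbf{J}^{-1}\widehat{\mathbf{D}}\mathbb{Z}$ and vice versa. Thus specifying one is the same as specifying the other, which gives the ``if and only if.''

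For the precision identity, substitute $\widehat{\mathbf{D}}=\mathbf{J}\mathbf{D}$ and $\widehat{\mathbf{G}}\widehat{\mathbf{G}}^\top=\mathbf{J}\mathbf{G}\mathbf{G}^\top\mathbf{J}^\top$ to get
$\widehat{\mathbf{D}}^\top(\widehat{\mathbf{G}}\widehat{\mathbf{G}}^\top)^{-1}\widehat{\mathbf{D}}=\mathbf{D}^\top\mathbf{J}^\top\mathbf{J}^{-\top}(\mathbf{G}\mathbf{G}^\top)^{-1}\mathbf{J}^{-1}\mathbf{J}\mathbf{D}=\mathbf{D}^\top(\mathbf{G}\mathbf{G}^\top)^{-1}\mathbf{D}$.
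This step uses invertibility of $\mathbf{G}(\mathbf{t})\mathbf{G}(\mathbf{t})^\top$, which follows from that of $\widehat{\mathbf{G}}$ and $\mathbf{J}$. Since $\mathbf{W}(\mathbf{t})$ in \eqref{postPrec} depends on the process model only through this term, $\mathbf{W}(\mathbf{t})$ is unchanged. The Kriging coefficients \eqref{KrigingFunc} therefore coincide.

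The main obstacle is not computational but definitional: I need to pin down precisely what ``equivalent process model'' means for an intrinsic field. The argument above handles this by identifying the model with the law of the full increment vector, which is legitimate because $\mathbf{J}$ is a bijection between the two increment parametrizations. A secondary point is checking that $\mathbf{M}$ is nonsingular, so that $\mathbf{G}$ is invertible; this is implicit in the paper's standing assumptions and follows in any case from the invertibility of $\widehat{\mathbf{G}}$.
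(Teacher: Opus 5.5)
Your proof is correct and follows essentially the same route as the paper. Both use $\widehat{\mathbf{D}}\mathbf{e}=\mathbf{0}$ to obtain $\widehat{\mathbf{D}}=\mathbf{J}\mathbf{D}$, and both then get the equivalence of the two process models and the invariance of $\mathbf{D}^\top(\mathbf{G}(\mathbf{t})\mathbf{G}(\mathbf{t})^\top)^{-1}\mathbf{D}$ from the nonsingular congruence by $\mathbf{J}$. The only difference is the order of the first step: you derive $\hat{\mathbf{d}}=-\mathbf{J}\mathbf{e}$ first and read off invertibility of $\mathbf{J}$ from $\mathsf{rank}(\mathbf{J}\mathbf{D})=n$, whereas the paper first argues that every $n$ columns of $\widehat{\mathbf{D}}$ are independent and then computes $\mathbf{J}^{-1}\hat{\mathbf{d}}=-\mathbf{e}$. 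Your ordering is slightly cleaner.
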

\begin{proof}
 Note first that the conditions, $\mathsf{rank}(\widehat{\mathbf{D}})=n$ and $\widehat{\mathbf{D}}\mathbf{e}=\mathbf{0}$, imply that \emph{every} set of $n$ columns of $\widehat{\mathbf{D}}$ must be linearly independent and the $n \times n$ submatrix, $\mathbf{J}$, of $\widehat{\mathbf{D}}$ consisting of the last $n$ columns must be invertible.  Let $\mathbf{f}=\mathbf{J}^{-1}\hat{\mathbf{d}}$, so that $\mathbf{J}^{-1}\widehat{\mathbf{D}}=[\mathbf{f}\,|\,\mathbf{I}]$
Since $\widehat{\mathbf{D}}\mathbf{e}=\mathbf{0}$, then  $\mathbf{f}=-\mathbf{e}$ and so $\mathbf{J}\mathbf{D}=\widehat{\mathbf{D}}$. If $\widehat{\mathbf{G}}(\mathbf{t})\widehat{\mathbf{G}}(\mathbf{t})^\top=\mathbf{J}\left(\boldsymbol{\gamma}(\mathbf{t})\,\mathbf{e}^{\!\top}+\mathbf{e}\,\boldsymbol{\gamma}(\mathbf{t})^{\!\top}-\boldsymbol{\Gamma}\right)\mathbf{J}^\top=\mathbf{J}\mathbf{G}(\mathbf{t})\mathbf{G}(\mathbf{t})^\top\mathbf{J}^\top$
then $\widehat{\mathbf{D}}\,\mathbb{Z}=\widehat{\mathbf{G}}(\mathbf{t})\mathbf{u}$ will be equivalent to $\mathbf{D}\,\mathbb{Z}=\mathbf{G}(\mathbf{t})\mathbf{u}$, and conversely. Note that it need \emph{not} be true that $\widehat{\mathbf{G}}=\mathbf{J}\mathbf{G}$.
\end{proof}
To illustrate this, suppose we wish to use consecutive differences: 
$$
\widehat{\mathbf{D}}\,\begin{pmatrix}
        Z(\mathbf{t})\\ Z(\mathbf{s}_1)\\Z(\mathbf{s}_2)\\ \vdots \\Z(\mathbf{s}_n)
\end{pmatrix}=\begin{pmatrix}
        Z(\mathbf{s}_1)-Z(\mathbf{t})\\
        Z(\mathbf{s}_2)-Z(\mathbf{s}_1)\\
         Z(\mathbf{s}_3)-Z(\mathbf{s}_2)\\
         \vdots \\
        Z(\mathbf{s}_n)-Z(\mathbf{s}_{n-1})
 \end{pmatrix}
 \quad\mbox{with}\quad \widehat{\mathbf{D}}=\left[\begin{array}{cccccc} 
 -1&1 & 0 & 0&\cdots& 0\\
 0& -1& 1 &  0&  & 0 \\
 0& 0&  -1& 1 &  & 0 \\
\vdots & & & \ddots & \\ 0 &0 & 0 &\cdots& -1 &1\end{array}\right],
$$
then we could follow similar steps as above in order to formulate the covariance for these increments directly in terms of variogram data.  Start with an algebraic identity: 
{\small 
\begin{align*}
(Z(\mathbf{s}_{i+1})-Z(\mathbf{s}_i))(Z(\mathbf{s}_{j+1})-Z(\mathbf{s}_j)) =\frac12&\left[
(Z(\mathbf{s}_i)-Z(\mathbf{s}_{j+1}))^{2}
+(Z(\mathbf{s}_{i+1})-Z(\mathbf{s}_j))^{2}\right.\\
& \left.-(Z(\mathbf{s}_{i+1})-Z(\mathbf{s}_{j+1}))^{2}
-(Z(\mathbf{s}_i)-Z(\mathbf{s}_j))^{2}\right]
\end{align*} }
Then taking expectations: 
{\small 
\begin{align*}
\mathbb{E}[(Z(\mathbf{s}_{i+1})-Z(\mathbf{s}_i))(Z(\mathbf{s}_{j+1})-Z(\mathbf{s}_j))] =&\gamma(\mathbf{s}_i,\mathbf{s}_{j+1})
+\gamma(\mathbf{s}_{i+1},\mathbf{s}_j) -\gamma(\mathbf{s}_{i+1},\mathbf{s}_{j+1})
-\gamma(\mathbf{s}_i,\mathbf{s}_j)\\
&= - \begin{bmatrix}
    -1 & 1
\end{bmatrix} \begin{bmatrix}
    \gamma(\mathbf{s}_i,\mathbf{s}_j) & \gamma(\mathbf{s}_i,\mathbf{s}_{j+1})\\
   \gamma(\mathbf{s}_{i+1},\mathbf{s}_j) & \gamma(\mathbf{s}_{i+1},\mathbf{s}_{j+1})
\end{bmatrix}\begin{bmatrix}
    -1 \\ 1
\end{bmatrix}.
\end{align*} }
Expanding this out into matrix form:
$$
\mathsf{cov}(\widehat{\mathbf{D}}\,\mathbb{Z})=
-\widehat{\mathbf{D}}\begin{bmatrix}
    0 & \boldsymbol{\gamma}(\mathbf{t})^{\!\top} \\
    \boldsymbol{\gamma}(\mathbf{t}) & \boldsymbol{\Gamma}
\end{bmatrix}\widehat{\mathbf{D}}^\top
$$
Not surprizingly, changing which increments are modeled  will change the associated increment covariances (e.g., $\mathsf{cov}(\mathbf{D}\,\mathbb{Z})\neq\mathsf{cov}(\widehat{\mathbf{D}}\,\mathbb{Z})$). However, so long as one chooses nondegenerate increments, the associated increment covariances will be related via nonsingular congruence and, critically, the prior precision of $\mathbb{Z}$ will be \emph{independent} of this choice.  For example,  $\mathbf{D}$ and $\widehat{\mathbf{D}}$ are related by: 
\begin{equation} \label{congruenceMap}
\mathbf{J}\,\mathbf{D}=\widehat{\mathbf{D}}\ \mbox{ with}\quad 
\mathbf{J}=\left[\begin{array}{ccccc} 
 1& 0 &  & \cdots &0 \\
 -1 & 1& 0 &  & 0 \\
 0 & -1& 1 &  & 0 \\
\vdots & &\ddots & \ddots & \vdots \\ 
0 &0  &\cdots&-1 & 1  
\end{array}\right]\in\mathbb{R}^{n\times n},\quad\mbox{and}\quad\mathbf{J}^{-1}=\left[\begin{array}{ccccc} 
 1& 0 &  & \cdots &0 \\
 1 & 1& 0 &  & 0 \\
\vdots & & & \ddots & \vdots \\ 
1 &1  & 1 &\cdots& 1  
\end{array}\right]\in\mathbb{R}^{n\times n}.
\end{equation}

The increment covariances are related as 
$$
\mathbf{J}\,\mathbf{D}\begin{bmatrix}
    0 & \boldsymbol{\gamma}(\mathbf{t})^{\!\top} \\
    \boldsymbol{\gamma}(\mathbf{t}) & \boldsymbol{\Gamma}
\end{bmatrix}\mathbf{D}^\top\,\mathbf{J}^\top=\widehat{\mathbf{D}}\begin{bmatrix}
    0 & \boldsymbol{\gamma}(\mathbf{t})^{\!\top} \\
    \boldsymbol{\gamma}(\mathbf{t}) & \boldsymbol{\Gamma}
\end{bmatrix}\widehat{\mathbf{D}}^\top,
$$
which is a nonsingular congruence. 
The prior precision of the IRF $\mathbb{Z}$ is: 
{\small \begin{align*}
-\mathbf{D}^\top\left(\mathbf{D}\begin{bmatrix}
    0 & \boldsymbol{\gamma}(\mathbf{t})^{\!\top} \\
    \boldsymbol{\gamma}(\mathbf{t}) & \boldsymbol{\Gamma}
\end{bmatrix}\mathbf{D}^\top\right)^{-1}\mathbf{D}
&=-\mathbf{D}^\top\mathbf{J}^\top\left(\mathbf{J}\mathbf{D}\begin{bmatrix}
    0 & \boldsymbol{\gamma}(\mathbf{t})^{\!\top} \\
    \boldsymbol{\gamma}(\mathbf{t}) & \boldsymbol{\Gamma}
\end{bmatrix}\mathbf{D}^\top\mathbf{J}^\top\right)^{-1}\mathbf{J}\mathbf{D}\\
&\qquad =-\widehat{\mathbf{D}}^\top\left(\widehat{\mathbf{D}}\begin{bmatrix}
    0 & \boldsymbol{\gamma}(\mathbf{t})^{\!\top} \\
    \boldsymbol{\gamma}(\mathbf{t}) & \boldsymbol{\Gamma}
\end{bmatrix}\widehat{\mathbf{D}}^\top\right)^{-1}\widehat{\mathbf{D}},
\end{align*}}%
where the first expression is derived using increments taken relative to our regression target $Z(\mathbf{t})$, the second exploits the identity, $\mathbf{J}^{-1}\mathbf{J}=\mathbf{I}$, and the  final expression is derived using consecutive increments.  This means that the \textsf{IGP} Kriging coefficients will be \emph{independent} of  the choice of increments  used to define them.

\bigskip

\subsection{Computing posterior realizations}
\label{sec:postReal} 

IGPs have been used in traditional Kriging analyses where (possibly noisy) observations of an unknown spatial field $Z(s)$ are made on a finite set of locations $\mathbf{y}_i = Z(\mathbf{s}_i)+\epsilon_i$, $i=1,\ldots,n$. 
From these direct observations, the goal is to estimate $Z(\mathbf{t})$, with uncertainty, for $\mathbf{t} \in \Omega$.  Uncertainty might be represented in the form of a predictive (posterior) distribution for $Z(\mathbf{t})$, or realizations from this distribution.  In this section, we do both, but assume IGP priors.  

Moving beyond traditional Kriging analyses, \sect \ref{sec:IGPKrigCoeff}- \sect \ref{sec:flexIncr} offered representations for an IGP  prior that can be incorporated into Bayesian hierarchical models. For example, one could specify a prior density for an IGP ${Z}$ so that on a lattice of  $N$ unobserved locations $\mathbf{t}\in\{\mathbf{t}_1,\ldots,\mathbf{t}_{N}\}$ we predict values,
$\mathbf{Z} = \{Z(\mathbf{t}_1),\ldots,Z(\mathbf{t}_{N})\}$, taking into account
\[
[Z|\lambda_z] \propto \lambda_z^\frac{N}{2} \exp\{ -\half \lambda_z \mathbf{Z}^\top \mathbf{W} \mathbf{Z} \}
\]
where 
$\mathbf{W} =\mathbf{D}^T(\mathbf{D} 
(\delta\mathbf{e}\mathbf{e}^\top-\boldsymbol{\Gamma}) 
\mathbf{D}^T)^{-1}\mathbf{D}$ and $\lambda_z$ scales the prior precision matrix $\mathbf{W}$ (e.g., \cite{besag1995bayesian,haran2011gaussian}).

If we require predictions at a large number of locations $\{\mathbf{t}_1,\ldots,\mathbf{t}_{N}\}$, we can use the conditional normal rules to obtain the distribution for $\mathbf{Z} = (Z(\mathbf{t}_1),\ldots,Z(\mathbf{t}_{N}))^T$ given the observations $\mathbf{y}$ associated with 
locations $\mathbf{s}_1,\ldots,\mathbf{s}_{n}$.
Taking  
$$
\boldsymbol{\tilde{\Gamma}} =
\begin{bmatrix} 
\boldsymbol{\tilde\Gamma}_{\mathbf{tt}} & \boldsymbol{\tilde\Gamma}_\mathbf{ts} \\
\boldsymbol{\tilde\Gamma}_{\mathbf{st}} & \boldsymbol{\tilde\Gamma}_\mathbf{ss}
\end{bmatrix} =
\delta \mathbf{e}\mathbf{e}^\top -\begin{bmatrix} 
\boldsymbol{\Gamma}_{\mathbf{tt}} & 
\boldsymbol{\Gamma}_\mathbf{ts} \\
\boldsymbol{\Gamma}_{\mathbf{st}} & 
\boldsymbol{\Gamma}_\mathbf{ss}
\end{bmatrix} = 
\delta \mathbf{e}\mathbf{e}^\top -\boldsymbol{\Gamma}
$$ 
where $\boldsymbol{\Gamma}$ is the variogram matrix evaluated on the joint vector of observation and prediction locations $(\mathbf{t},\mathbf{s})$. If
$\sigma^2\mathbf{F}\mathbf{F}^\top$ is the covariance of the observation error, the posterior mean and variance are given by
\begin{eqnarray}
\nonumber
\boldsymbol{\mu}_{\rm post} &=& \boldsymbol{\tilde\Gamma}_{\mathbf{ts}} (\sigma^2\mathbf{F}\mathbf{F}^\top + \boldsymbol{\tilde\Gamma}_{\mathbf{ss}})^{-1}\mathbf{y}\\
\nonumber
\boldsymbol{\Sigma}_{\rm post} &=& 
\boldsymbol{\tilde\Gamma}_\mathbf{tt} - \boldsymbol{\tilde\Gamma}_{\mathbf{ts}}
(\sigma^2\mathbf{F}\mathbf{F}^\top + \boldsymbol{\tilde\Gamma}_{\mathbf{ss}})^{-1}
\boldsymbol{\tilde\Gamma}_\mathbf{st}
\end{eqnarray}
The posterior mean $\boldsymbol{\mu}_{\rm post}$ is the Kriging predictor for $\{Z(\mathbf{t}_1),\ldots,Z(\mathbf{t}_{N})\}$; the Bayesian solution gives a posterior distribution for the IGP $Z(t)$ at the prediction locations $\mathbf{t}\in\{\mathbf{t}_1,\ldots,\mathbf{t}_{N}\}$ so that $\mathbf{Z} \sim N(\boldsymbol{\mu}_{\rm post},\boldsymbol{\Sigma}_{\rm post})$. Given $\boldsymbol{\mu}_{\rm post}$ and $\boldsymbol{\Sigma}_{\rm post}$, posterior realizations of 
$\mathbf{Z}$ can be produced (e.g., as in the $5^{\rm th}$ column of Figure \ref{fig:fig1}) using a Cholesky factor of $\boldsymbol{\Sigma}_{\rm post}=\mathbf{R}_{\mathsf{post}}^\top\mathbf{R}_{\mathsf{post}})$, which would then be used to generate realizations as  $\mathsf{realization}= \boldsymbol{\mu}_{\rm post} + \mathbf{R}_{\mathsf{post}}^\top \mathbf{v}$, where $\mathbf{v}$ is a vector of standard normal draws.

The principal effort in computing the posterior mean and realizations will be to compute a Cholesky factor of the posterior covariance $\boldsymbol{\Sigma}_{\rm post}=\mathbf{R}_{\mathsf{post}}^\top\mathbf{R}_{\mathsf{post}}$. We must first compute a factorization of 
$\sigma^2\,\mathbf{F}\mathbf{F}^\top+\delta\,\mathbf{e}\mathbf{e}^\top-\boldsymbol{\Gamma}_{\mathbf{ss}}$ with a suitably chosen $\delta>0$. This has been discussed in \sect \ref{FactorIRF}, and we assume that both a suitable $\delta>0$ and an associated Cholesky factorization of $\delta\,\mathbf{e}\mathbf{e}^\top-\boldsymbol{\Gamma}_{\mathbf{ss}}=\mathbf{L}_\mathbf{s}\mathbf{L}_\mathbf{s}^\top$ has been obtained. 
In order to factor $\sigma^2\,\mathbf{F}\mathbf{F}^\top+\delta\,\mathbf{e}\mathbf{e}^\top-\boldsymbol{\Gamma}$, write $\sigma^2\,\mathbf{F}\mathbf{F}^\top+\delta\,\mathbf{e}\mathbf{e}^\top-\boldsymbol{\Gamma}=\begin{bmatrix}
  \mathbf{L}_\mathbf{s} &  \sigma\mathbf{F} 
\end{bmatrix}\begin{bmatrix}
   \mathbf{L}_\mathbf{s}^\top\\ \sigma\mathbf{F}^\top  
\end{bmatrix}$, and compute the \textsf{QR} factorization of $\begin{bmatrix}
    \mathbf{L}_\mathbf{s}^\top\\
    \sigma\mathbf{F}^\top 
\end{bmatrix}$: $\mathbf{Q}^\top\begin{bmatrix}
    \mathbf{L}_\mathbf{s}^\top\\
    \sigma\mathbf{F}^\top 
\end{bmatrix}=\begin{bmatrix}
    \mathbf{R}_\mathbf{s}\\
    \mathbf{0} 
\end{bmatrix}$ with upper triangular $\mathbf{R}_\mathbf{s}$ which constitutes the Cholesky factor of $\sigma^2\,\mathbf{F}\mathbf{F}^\top+\delta\,\mathbf{e}\mathbf{e}^\top-\boldsymbol{\Gamma}_{\mathbf{ss}}$. The unitary component $\mathbf{Q}$ need not be retained.  Again refering to \sect \ref{FactorIRF}, suppose we have computed a factorization of $\boldsymbol{\tilde\Gamma}_\mathbf{tt}=\delta \mathbf{e}\mathbf{e}^\top -\boldsymbol{\Gamma}_\mathbf{tt}=\mathbf{L}_\mathbf{t}\mathbf{L}_\mathbf{t}^\top$. We wish to produce a Cholesky factor $\mathbf{R}_{\mathsf{post}}$ that satisfies:  $\mathbf{R}_{\mathsf{post}}^\top\mathbf{R}_{\mathsf{post}}=\mathbf{L}_\mathbf{t}\mathbf{L}_\mathbf{t}^\top-\boldsymbol{\tilde\Gamma}_{\mathbf{ts}}(\mathbf{R}_\mathbf{s}^\top\mathbf{R}_\mathbf{s})^{-1}  \boldsymbol{\tilde\Gamma}_{\mathbf{st}}$.  This task is equivalent to ``Cholesky downdating" and amounts to computing a unitary matrix $\widehat{\mathbf{Q}}$ such that $\widehat{\mathbf{Q}}^\top
\begin{bmatrix}
\mathbf{R}_{\mathsf{post}}\\
    \mathbf{R}_\mathbf{s}^{-\top}\boldsymbol{\tilde\Gamma}_{\mathbf{st}}
\end{bmatrix}=\begin{bmatrix}
\mathbf{R}_{\mathsf{s}}\\
    \mathbf{0}
\end{bmatrix}$. As before, the unitary component $\widehat{\mathbf{Q}}$ need not be retained.  We suppress details here but much of this process is standard and one can refer \cite{bojanczyk1987note} and 
\cite[\S 4.3]{stewart1998matrix} for detailed information. 

In Fig.\ref{fig:swot}, we illustrate IGP Kriging with a  second and more involved example than the one motivating the paper in  \sect \ref{sec:sec1}.  For this new example, we model simulated sea surface height (SSH) measurements collected by the satellite, SWOT \cite{SWOTDocs}.  The goal is to use a small set of observations to estimate the SSH throughout a complete region.   For example,  we plot posterior realizations of SSH that are produced given a track of measurements from a simulated  SWOT satellite pass \cite{yaremchuk2023effect,yaremchuk2024block}.
\begin{figure}[th]
  \centerline{
   \includegraphics[width=5.2in,angle=0] {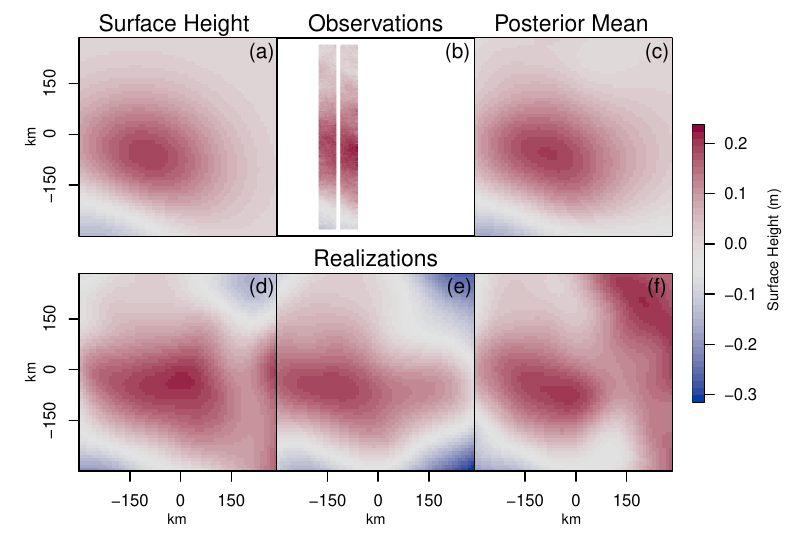}}
   \caption{\label{fig:swot} (a) true synthetic sea surface height (SSH) field; (b) satellite measurements; (c) posterior mean given the observations and variogram; (d)-(f) posterior realizations of SSH over the prediction region.}
\end{figure}
This synthetic example uses an observation error covariance matrix developed for  SWOT satellite measurements, $\sigma^2\mathbf{F}\mathbf{F}^\top$, along with a 2-d IGP prior to produce posterior realizations of SSH over a rectangular section of ocean.  Here we use a variogram resulting from convolving and scaling 2-d Brownian motion
\[
\gamma(\mathbf{d}) = 2r\sigma_z^2  \cdot \Gamma\bigg(\frac{3}{2}\bigg)
 \left[ 
  _1F_1 \left( -\frac{1}{2}; 1; - \frac{||\mathbf{d}||^2}{4r^2}-1 \right)
\right]
\]
where $_1F_1()$ is the confluent hypergeometric function of the first kind, $\sigma^2_z$ is set to 0.00008 m$^2$, and $r$ is set to 50km.
Given the satellite measurements $\mathbf{y}$, their modeled measurement error covariance matrix $\sigma^2\mathbf{FF^\top}$, and the variogram prior described above producing  $\boldsymbol{\Gamma}$, the resulting posterior mean and three realizations are shown in Fig.\ref{fig:swot}.

\section{Summary }
\label{sec:summary}
This work highlights connections between interpolation via stationary GPs, limit kriging, rational kriging, rational interpolation, Shepard interpolation, and ($0^{\rm th}$-order) IGPs.
While some material in \sect 3 and \sect 4 has been discussed in earlier references (e.g., see \cite{matheron1973intrinsic,chiles2012geostatistics}); we present these ideas from a traditional GP perspective so that IGP-based computations can be more readily taken up and implemented by those who model with standard GPs.

The overriding theme of this work  suggests IGPs as an alternative to traditional stationary GPs, as well as to other variants such as limit kriging and rational kriging.  Like other variants we discussed, the resulting posterior mean/Kriging estimates do not show a pull  towards a stationary process mean. The UQ advantages of traditional, stationary GPs are also readily available in prior specifications with IGPs.  Working with IGPs comes with some appealing parsimony as well -- the resulting interpolants (or predictions) are linear both with respect to observations and location parameters; there is no marginal mean parameter to estimate; and the dependence encoded in the intrinsic variogram is often simpler (i.e. fewer parameters) than that of their stationary counterparts.

Section  \ref{sec:postReal} shows how posterior/predictive simulations resulting from IGPs can be carried out directly through judicious use of orthogonal transformations (in order to extract Cholesky factorizations), providing an alternative to turning bands (e.g. \cite{emery2006tbsim}).  While beyond the scope of this work, we note that recent advances that facilitate the use of GPs in large data settings (e.g. Vecchia approximation \cite{vecchia1988estimation,katzfuss2021general}) and nonlinear problems (e.g. elliptical slice sampling \cite{murray2010elliptical}) are readily amenable to posteriors resulting from IGP-based formulations.  

Finally, we note there is not a large collection of $0^{\rm th}$ IGP variograms to choose from in practice; this contrasts with the broad set of choices available for stationary GPs.  If IGPs prove to have advantages over stationary GP modeling, a larger menu of established intrinsic variograms could prove helpful.

\bibliography{RatEst4InGP_biblio}
\end{document}